\theoremstyle{plain}
\newtheorem{theorem}{Theorem}
\newtheorem{lemma}[theorem]{Lemma}
\newtheorem{prop}[theorem]{Proposition}
\theoremstyle{definition}
\newtheorem{definition}[theorem]{Definition}
\DeclareMathOperator*{\argmax}{arg\,max}
\DeclareMathOperator*{\argmin}{arg\,min}
\begin{document}

\title{Convergence of discrete Aubry-Mather model in the continuous limit}

\author{Xifeng Su \footnote{School of Mathematical Sciences,
Beijing Normal University,
No. 19, XinJieKouWai St.,HaiDian District,
 Beijing 100875, P. R. China,
\texttt{xfsu@bnu.edu.cn}}
\and
Philippe Thieullen \footnote{Institut de Math\'ematiques de Bordeaux,
Universit\'e de Bordeaux,
351, cours de la Lib\'eration - F 33405 Talence, France,
\texttt{philippe.thieullen@u-bordeaux.fr}
}
} 

\date{\today}

\maketitle
\tableofcontents

\begin{abstract}
We develop two approximation schemes for solving the cell equation and the discounted cell equation  using Aubry-Mather-Fathi theory. The Hamiltonian is supposed to be Tonelli, time-independent , and periodic in space. By Legendre transform it is equivalent to find a fixed point of some nonlinear operator, called Lax-Oleinik operator, which may be discounted or not. By discretizing in time, we are led to solve an additive eigenvalue problem involving a discrete Lax-Oleinik operator. We show how to approximate the effective Hamiltonian and  some weak KAM solutions  by letting the time step in the discrete model tend to zero. We also obtain a selected discrete  weak KAM solution as in \cite{DaviniFathiIturriagaZavidovique2014} and show it converges to a particular solution of the cell equation. In order to unify the two settings, continuous and  discrete , we develop a more general formalism of short-range interactions.

{\bf Keywords: }discrete weak KAM theory, Frenkel-Kontorova models,  
Aubry-Mather theory, discounted Lax-Oleinik operator, ergodic cell equation, short-range interactions, additive eigenvalue problem
\end{abstract}

\section{Introduction}

We consider in  this article a Hamiltonian $H(x,p):\mathbb{T}^d\times \mathbb{R}^d\rightarrow \mathbb{R}$ which is $C^2$, periodic in $x$, time independent, and satisfies the following assumptions:
\begin{itemize}
\item [(L1)] \textbf{Positive Definiteness:} $H(x,p)$ is strictly convex with respect to $p$, i.e., the second partial derivative $\frac{\partial^2 H}{\partial p^2}(x,p)$ is positive definite as a quadratic form uniformly in $x \in \mathbb{T}^d$ and $\|p\| \leq R$, for every $R>0$;

\item [(L2)] \textbf{Superliner growth:} $H(x,p)$ is superlinear with respect  to  $p$, uniformly in $x$, that is,
\[
\lim_{\|p\|\rightarrow +\infty} \inf_{x\in\mathbb{T}^d} \frac{H(x,p)}{\|p\|}  = +\infty.
\]
\end{itemize}

\noindent We will say that $H(x,p)$ is  a {\it Tonelli Hamiltonian}. We denote by $L(x,v)$ the Legendre-Fenchel transform of $H(x,p)$. We call $L(x,v)$ the {\it Lagrangian} of the system;  $L(x,v)$  is again $C^2$, strictly convex with respect to $v$, and superlinear.  A more general framework could be chosen where $\mathbb{T}^d \times \mathbb{R}^d$ is replaced by the cotangent space $T^*M$ of some compact manifold $M$, but this approach would increase the complexity of the notations.  To illustrate the two approximation schemes we are going to present, we choose the following basic Hamiltonian
\[
H(x,p) = \frac{1}{2}\|p+P\|^2 -K(1-\cos(2\pi \, N \cdot x)),
\]
where $P \in \mathbb{R}^d, N \in \mathbb{Z}^d$ and $K \in \mathbb{R}$ are three parameters. The Lagrangian becomes
\[
L(x,v ) = \frac{1}{2}\|v\|^2  -P \cdot v  + K(1-\cos(2\pi \, N\cdot  x)).
\]

We consider the following two equations: the {\it PDE cell equation} and the {\it discounted PDE cell equation},
\begin{gather}
H(x,du(x))=\bar H,  \label{equation:PDEcellEquation} \\
\delta u_\delta(x) + H(x,d_xu_\delta(x)) = 0, \label{equation:DiscountedCellEquation}
\end{gather}
where $u(x)$ and $u_\delta(x)$ are understood in the viscosity sense. Our main objective is to describe an ergodic approximation scheme for each equation.

Equation \eqref{equation:PDEcellEquation} is a degenerate PDE equation of first order with two unknowns $(\bar H, u)$. The constant $\bar H$ is  unique and  is  called  {\it effective Hamiltonian}. The function $u(x)$ is  $C^0$ periodic but may not be unique. Equation \eqref{equation:DiscountedCellEquation} is more regular and admits a unique $C^0$ periodic solution $u_\delta(x)$. Equation \eqref{equation:PDEcellEquation} has first been studied by Lions, Papanicolaou and Varadhan \cite{Lions'87}. A comprehensive treatment may be found in Crandall, Ishii and Lions \cite{Crandall'92}, Bardi and Capuzzo-Dolcetta \cite{BardiDolcetta'97} or Barles \cite{Barles'94}. Some recent overviews may be found in the articles  \cite{Ishii2013,Barles2013}. 

A new approach has been initiated by Mather and Fathi \cite{Mather'91, Mather'93,Fathi'97_1, Fathi'97_2, Fathi'08} to solve equation \eqref{equation:PDEcellEquation}. Fathi showed that   \eqref{equation:PDEcellEquation}  is equivalent to  an additive eigenvalue problem for a semi-group of non-linear operators,
\begin{align}
u(x) - t \bar H &= T^t[u](x),\quad \forall t>0, \ \forall x\in\mathbb{R}^d, \label{equation:ErgodicCellEquation} \\
T^t|u](x) &:= \inf_{\substack{\gamma\in C^{ac}([-t,0], \mathbb{R}^d) \\ \gamma(0)=x}} \Big[ u(\gamma(-t)) + \int_{-t}^0\!  L(\gamma,\dot\gamma) \ ds \Big], \label{equation:LinearProgrammingPrinciple}
\end{align}
(where the infimum is taken over absolutely continuous paths over $[-t,0]$ with terminal point $x \in \mathbb{R}^d$). For Tonelli Hamiltonian, the  infimum is actually attained by a $C^2$ curve thanks to  Tonelli-Weierstrass theorem.   

Equation \eqref{equation:ErgodicCellEquation} is called the  {\it ergodic cell equation}, $T^t$ is called the {\it (backward) Lax-Oleinik semi-group}. The unknown $u(x)$ is called  by Fathi {\it weak KAM solution}, $\bar H$ is as before the effective Hamiltonian.  Ma\~n\'e \cite{Mane'96} recognized first the importance of this constant $\bar H$. After Contreras and Iturriaga \cite{Contreras'99}, $\bar H$ is called the  {\it Ma\~n\'e critical value}: $\bar H$ has the explicit value
\begin{equation} \label{equation:EffectiveHamiltonian}
-\bar H := \lim_{t \to +\infty} \  \inf_{\gamma\in C^{ac}([-t,0],\mathbb{R}^d)} \Big[ \frac{1}{t}  \int_{-t}^0 L(\gamma,\dot\gamma) \,ds \Big].
\end{equation}

Equation \eqref{equation:DiscountedCellEquation} has been studied by \cite{Lions'87,Crandall'92,Barles'94,BardiDolcetta'97}. The solution is unique and given explicitly  by the integral formula
\begin{equation}\label{equation:DiscoutedProgrammingPrinciple}
u_\delta(x) = \inf_{\substack{\gamma \in C^2((-\infty,0],\mathbb{R}^d) \\ \gamma(0)=x}} \int_{-\infty}^0\! e^{s\delta} L(\gamma(s),\dot\gamma(s)) \, ds,
\end{equation} 
where the infimum is taken over  $C^2$ paths ending at $x$ with a uniformly bounded first and second derivative. The two equations \eqref{equation:PDEcellEquation} and \eqref{equation:DiscountedCellEquation} are related, but very recently, the authors of \cite{DaviniFathiIturriagaZavidovique2014} showed that  $u_\delta(x)$, correctly normalized, converges to a selected solution $u^*(x)$ of \eqref{equation:ErgodicCellEquation}, 
\begin{equation}\label{equation:AsymptoticallyDiscountedSolution}
\lim_{\delta\to 0} \left( u_\delta(x) + \frac{\bar H}{\delta} \right) = u^*(x) \quad\text{(exists in the $C^0$ topology).}
\end{equation}
We will call this selected solution $u^*$, the {\it balanced  weak KAM solution}. 

Our main objective is to develop  approximation schemes that solves  \eqref{equation:PDEcellEquation} and \eqref{equation:DiscountedCellEquation}. In the first  scheme, we compute an approximated effective Hamiltonian of \eqref{equation:EffectiveHamiltonian} and an approximated weak KAM solution of \eqref{equation:ErgodicCellEquation}. In the second  scheme, we compute an approximated discounted weak KAM solution of  \eqref{equation:DiscoutedProgrammingPrinciple} and show a similar selection principle. In both cases we discretize in time, either the semi-group \eqref{equation:LinearProgrammingPrinciple} or the integral formula \eqref{equation:DiscoutedProgrammingPrinciple},  and rewrite the two problems in the framework of Frenkel-Kontorova model.

The  Frenkel-Kontorova model  has been studied in solid state physics in 1D by \cite{FK'38} and then more rigorously by Aubry and Le Daeron \cite{Aubry'83},  Chou and Griffiths \cite{Griffiths'86}, and in higher dimension by Gomes \cite{Gomes'05},  Garibaldi and Thieullen \cite{Thieullen'11}. Similar problems under the name of Aubry-Mather theory have been studied using transport theory by Bernard and Buffoni \cite{BernardBuffoni2007} and Zavidovique \cite{Zavidovique2012}. The Frenkel-Kontorova model describes the space of configurations of an infinite chain of atoms $(x_n)_{n\in\mathbb{Z}}$ at the ground-level energy. In this model  $x_n$ denotes the position of the n-th atom of the chain in $\mathbb{R}^d$, and $E(x_n,x_{n+1})$ denotes a {\it short-range interaction} between  two successive atoms. The interaction $E(x,y)$ models both the internal interaction between nearest atoms and  the external interaction with the substrate. The {\it original  Frenkel-Kontorova model} \cite{FK'38}  is given by 
\begin{equation*}
E(x,y) = \frac{1}{2} \|y-x\|^2  - P\cdot (y-x) + K(1-\cos(2\pi \, N \cdot  x)).
\end{equation*}
In solid state physics, it is more appropriate to write the elastic  interaction as $\frac{1}{2}\| y-x-P\|^2$ instead of $\frac{1}{2}\|y-x\|^2 - P\cdot(y-x)$ where $P$ denotes the mean distance at rest between two successive atoms of the chain. In Mather theory,  $P$ represents  a cohomological term. 

The main problem in the Frenkel-Kontorova model is to understand the set of configurations that minimize the total interaction $\sum_{n\in\mathbb{Z}} E(x_n,x_{n+1})$ in a precise sense. Chou and Griffiths \cite{Griffiths'86} highlighted first the importance of the two following quantities:   $\bar E$, the effective interaction  of the system  (or the ground-state energy in Gibbs theory), $u(x)$,  the effective potential which is a continuous periodic function that calibrates the interaction energy. They showed that $(\bar{E},u)$ can be seen as two unknowns of a discrete additive eigenvalue equation, now called,  {\it discrete (backward) Lax-Oleinik equation}, 
\begin{equation}\label{equation:DiscreteLaxOleinik}
u(y) + \bar E = \inf_{x\in\mathbb{R}^d} \big\{ u(x) + E(x,y) \big\}, \quad \forall y \in \mathbb{R}^d.
\end{equation}

The goal of the first scheme is to show that one can solve \eqref{equation:ErgodicCellEquation} by solving     \eqref{equation:DiscreteLaxOleinik} with the following interaction $E(x,y) = \mathcal{L}_{\tau}(x,y)$ and by letting $\tau \to 0$. We call  {\it discrete action},
\begin{equation} \label{equation:DiscreteAction}
\mathcal{L}_{\tau}(x,y) := \tau L\Big(x,\frac{y-x}{\tau}\Big), \quad \forall \tau>0.
\end{equation}
If $(\bar{\mathcal{L}}_\tau, u_\tau)$ is a solution of \eqref{equation:DiscreteLaxOleinik}, one obtains in particular
\[
\lim_{\tau\to0} \frac{\bar{\mathcal{L}}_\tau}{\tau} = -\bar H, \quad \lim_{\tau_i\rightarrow 0} u_{\tau_i} = u\  (\text{ for some subsequence } \tau_i\searrow 0).
\]
The discrete action associated to the basic example is given  for instance by
\[
E_\tau(x,y) = \frac{1}{2\tau} \|y-x\|^2  -P \cdot (y-x) +\tau K(1-\cos(2\pi \, \cdot x)).
\]
We recognize the original Frenkel-Kontorova model by taking $\tau=1$. Notice that \eqref{equation:ErgodicCellEquation} can trivially be written as a discrete Lax-Oleinik equation  with the following short-range interaction $E(x,y) = \mathcal{E}_\tau(x,y)$.  We call {\it minimal action}
\begin{equation}\label{equation:MinimalAction}
\mathcal{E}_{ \tau} (x,y) := \inf_{\substack{\gamma\in C^{ac}([0,\tau], \mathbb{R}^d)\\ \gamma(0)=x ,\ \gamma(\tau)=y} }  \int_0^\tau L( \gamma(t),\dot{\gamma}(t)) \ dt, \quad \forall \tau>0, \ \forall x,y \in\mathbb{R}^d.
\end{equation}
The infimun can be realized by some $C^2$ curve thanks to Tonelli--Weierstrass  theorem. We will use  $\mathcal{L}_\tau(x,y)$ as  a numerical tool to solve  \eqref{equation:ErgodicCellEquation}. Several algorithms can be used to solve  \eqref{equation:DiscreteLaxOleinik} like Ishikawa's iterative method. We will use  $\mathcal{E}_\tau(x,y)$  as a theoretical tool to prove the convergence of the scheme.  

The goal of the second scheme is to extend in the discrete case the main result of Davini, Fathi, Iturriaga, and Zavidovique in their first paper \cite{DaviniFathiIturriagaZavidovique2014}. We were aware of a second paper  \cite{DFIZ2} related to ours after this paper was completed. However, in the latter paper, the authors do not consider the convergence issues of the approximations scheme. We will show in particular that the solution $u_{\tau,\delta}$ of the {\it discounted discrete Lax-Oleinik equation}
\begin{equation} \label{equation:DIscountedDiscreteLaxOleinik}
u_{\tau,\delta}(y ) = \inf_{x \in \mathbb{R}^d} \big\{ (1-\tau\delta) u_{\tau,\delta}(x) + \mathcal{L}_\tau(x,y \big\}, \forall \tau>0, \ \forall y \in \mathbb{R}^d
\end{equation}
satisfies for every $\tau>0$, $\displaystyle{\lim_{\delta \to 0} \Big( u_{\tau,\delta}-\frac{\bar{\mathcal{L}}_\tau}{\tau\delta} \Big) = u_\tau^*}$ \  and \  $\displaystyle{\lim_{\substack{\tau,\delta \to 0 \\ \tau/\delta \to 0}} \Big( u_{\tau,\delta} -\frac{\bar{\mathcal{L}}_\tau}{\tau\delta} \Big) =  u^*}$.

\section{Main results}

The two previous short-range interactions ${\mathcal{L}_\tau}(x,y)$ and  ${\mathcal{E}_\tau}(x,y)$ belong to a class of parametrized interactions that we are going to discuss. We focus on the following definition on the fact that $\|y-x\|$, (the sup norm),  and $\tau$  should have the same order of magnitude as $\tau\to0$: we call this property {\it short-range}.

\begin{definition}\label{MainHypothesis}
We call {\it  short-range interaction}, a one-parameter family of functions $E_\tau(x,y) : \mathbb{R}^d \times \mathbb{R}^d \to \mathbb{R}$ indexed by $\tau>0$ satisfying:
\begin{itemize}
\item[(H1)] $E_\tau(x,y)$ is {\bf continuous} in $(x,y)$ for every $\tau>0$;
\item[(H2)] $E_\tau(x,y)$ is {\bf translational periodic} for every $\tau>0$:
 \[
E_{\tau} (x+k, y+k) = E_{\tau} (x,y), \quad \forall k\in\mathbb{Z}^d \quad \text{and} \quad \forall x,y\in\mathbb{R}^d;
\]
\item[(H3)] $E_\tau(x,y)$ is {\bf  coercive} for every $\tau>0$:
\[
\lim_{R\rightarrow +\infty}  \ \inf_{\|x-y\| \geq  R} \ E_{\tau}(x,y) = +\infty;
\]
\item[(H4)] $E_\tau(x,y)$ is {\bf uniformly bounded}: for every $R>0$
\begin{equation*}
\inf_{\tau \in (0,1]} \ \inf_{x,y \in \mathbb{R}^d} \ {\textstyle \frac{1}{\tau}}E_\tau(x,y) > -\infty, \quad
\sup_{\tau \in (0,1]} \ \sup_{\|y-x\| \leq \tau R} {\textstyle \frac{1}{\tau}} E_\tau(x,y) < +\infty;
\end{equation*}
\item[(H5)] $E_\tau(x,y)$ is {\bf uniformly superlinear}: 
\[
\lim_{R\rightarrow +\infty} \ \inf_{\tau\in (0,1]} \  \inf_{\|x-y\| \geq \tau R}  \frac{E_{\tau}(x,y)}{\|x-y\|}= +\infty;
\]
\item[(H6)] $E_\tau(x,y)$ is {\bf uniformly Lipschitz}: for every $R>0$, there exists a constant $C(R)>0$ such that, for every $\tau \in (0,1]$ and for every  $x,y,z \in \mathbb{R}^d$, 

\medskip
-- if $\|y-x\| \leq \tau R$ and $\|z-x\|   \leq \tau R$ then
\begin{equation*}
| E_\tau(x,z) - E_\tau(x,y) | \leq C(R) \|z-y\|,
\end{equation*}
-- if $\|z-x\| \leq \tau R$ and $\|z-y\| \leq \tau R$ then
\begin{equation*}
| E_\tau(x,z) - E_\tau(y,z) | \leq C(R) \|y-x\|.
\end{equation*}
\end{itemize}
We call {\it periodic  interaction} associated  to $E_\tau(x,y)$, the doubly periodic function
\[
E^*_\tau(x,y) := \inf_{k\in\mathbb{Z}^d} E_\tau(x,y+k).
\]
\end{definition}

The following proposition says that the two short-range interactions $\mathcal{L}_\tau(x,y)$ and $\mathcal{E}_\tau(x,y)$  are comparable in the sense that  $|\mathcal{L}_\tau(x,y) - \mathcal{E}_\tau(x,y)| = O(\tau^2)$ uniformly on  $\|y-x\| = O(\tau)$.

\begin{prop}[Comparison estimate] 
\label{proposition:ActionComparison}
Let $H:\mathbb{T}^d \times \mathbb{R}^d \to \mathbb{R}$ be a Tonelli Hamiltonian  and $L$ be the associated Lagrangian. 
\begin{enumerate}
\item \label{item:HypothesisVerification} The two short-range interactions $(\mathcal{L}_\tau(x,y))_{\tau>0}$   and $(\mathcal{E}_\tau(x,y))_{\tau>0}$, defined in  \eqref{equation:DiscreteAction} and  \eqref{equation:MinimalAction} respectively, satisfy the hypotheses (H1)--(H6).
\item \label{item:ComparaisonVerification}  For every $R>0$, there exists a constant   $C(R)>0$ such that, if $\tau \in (0,1]$, $x,y \in \mathbb{R}^d$ satisfy $\|y-x\| \leq \tau R$, then
\begin{equation*}
|\mathcal{E}_\tau(x,y) -\mathcal{L}_\tau(x,y)| \leq \tau^2 C(R).
\end{equation*} 
\end{enumerate}
\end{prop}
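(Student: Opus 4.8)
The plan is to treat the two assertions of Proposition~\ref{proposition:ActionComparison} separately: \ref{item:HypothesisVerification} is a (somewhat long) verification, while \ref{item:ComparaisonVerification} carries the real content. Recall first that the Legendre--Fenchel transform $L$ of a $C^2$ Tonelli Hamiltonian is itself a $C^2$ Lagrangian, convex and superlinear in $v$, periodic in $x$, bounded below by $-\max_x H(x,0)$, and bounded and Lipschitz on every slab $\mathbb{T}^d\times\{\|v\|\le R\}$. For $\mathcal{L}_\tau(x,y)=\tau L(x,(y-x)/\tau)$ all of (H1)--(H6) are then immediate from the identity $\tfrac1\tau\mathcal{L}_\tau(x,y)=L(x,(y-x)/\tau)$ and the substitution $v=(y-x)/\tau$: (H1), (H2) are clear; (H3), (H5) hold because $\|y-x\|\ge\tau R$ forces $\|v\|\ge R$; (H4) comes from the lower bound on $L$ and its boundedness on slabs; (H6) because the relevant pairs of points lie in a common slab on which $L$ is Lipschitz, the factors $\tau$ cancelling (using $\tau\le1$). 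For the minimal action $\mathcal{E}_\tau$, (H2) follows from the change of variable $\eta=\gamma-k$ in the defining infimum; (H3), (H5) from the standard reformulation of superlinearity ($\forall A\ \exists B(A):L(x,v)\ge A\|v\|-B(A)$) together with $\int_0^\tau\|\dot\gamma\|\,dt\ge\|y-x\|$, which yields $\mathcal{E}_\tau(x,y)\ge A\|y-x\|-\tau B(A)$; and the upper bound in (H4) by testing against the straight segment $\gamma_0(t)=x+\tfrac t\tau(y-x)$, of constant velocity $(y-x)/\tau$ of norm $\le R$, giving $\tfrac1\tau\mathcal{E}_\tau(x,y)\le\max_{x,\|v\|\le R}L(x,v)$.

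The crux — and the step I expect to be the main obstacle — is an a priori velocity bound for the Tonelli minimizer of $\mathcal{E}_\tau$, \emph{uniform in $\tau\in(0,1]$}: there is $R'(R)>0$ such that if $\|y-x\|\le\tau R$ and $\gamma\in C^2([0,\tau],\mathbb{R}^d)$ minimizes $\int_0^\tau L(\gamma,\dot\gamma)$ with $\gamma(0)=x$, $\gamma(\tau)=y$, then $\sup_t\|\dot\gamma(t)\|\le R'(R)$. I would prove this by the energy method adapted to short intervals. From the segment bound $\mathcal{E}_\tau(x,y)\le\tau C(R)$ and $L(x,v)\ge\|v\|-B(1)$ one gets $\tfrac1\tau\int_0^\tau\|\dot\gamma\|\,dt\le C(R)+B(1)$, hence a time $t^*$ with $\|\dot\gamma(t^*)\|$ bounded by this average; since $L$ is $C^1$, the momentum $p^*=L_v(\gamma(t^*),\dot\gamma(t^*))$ is bounded, so the conserved energy $E_0=H(\gamma,L_v(\gamma,\dot\gamma))$ is bounded above by a constant depending only on $R$ (and below by $-\max_x L(x,0)$ by convexity). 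Finally, since $v\mapsto L_v(x,v)$ is proper uniformly in $x$ (it is the inverse of $p\mapsto H_p(x,p)$, which maps bounded sets to bounded sets uniformly in $x$) and $H$ is coercive in $p$ uniformly in $x$ by (L2), the set $\{(x,v):H(x,L_v(x,v))\le\text{const}(R)\}$ is bounded in $v$ uniformly in $x$; evaluating energy conservation at every $t$ gives $\|\dot\gamma(t)\|\le R'(R)$. This is essentially Fathi's a priori compactness lemma, here made uniform over $\tau\in(0,1]$.

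Granting this bound, the uniform Lipschitz estimate (H6) for $\mathcal{E}_\tau$ follows by endpoint surgery. If $\|y-x\|\le\tau R$ and $\|z-x\|\le\tau R$, let $\gamma$ be the minimizer from $x$ to $z$ and set $\eta(t)=\gamma(t)+\tfrac t\tau(y-z)$, an admissible path from $x$ to $y$; since $\|y-z\|\le2\tau R$ one has $\|\dot\eta(t)\|\le R'(R)+2R$, so $\gamma$ and $\eta$ both take values in $\mathbb{T}^d\times\{\|v\|\le R'(R)+2R\}$, on which $L$ is Lipschitz with some constant $C$. Then $\mathcal{E}_\tau(x,y)-\mathcal{E}_\tau(x,z)\le\int_0^\tau[L(\eta,\dot\eta)-L(\gamma,\dot\gamma)]\,dt\le C\int_0^\tau(\|\eta-\gamma\|+\|\dot\eta-\dot\gamma\|)\,dt\le C(\tau+1)\|y-z\|\le 2C\|y-z\|$, using $\|\eta-\gamma\|\le\|y-z\|$ and $\|\dot\eta-\dot\gamma\|=\|y-z\|/\tau$; the reverse inequality and the variant in the first argument are obtained the same way (surgery at the other endpoint). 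Since every $(x,y)$ belongs to some slab $\|y-x\|\le\tau R$, (H6) in particular yields the continuity (H1) of $\mathcal{E}_\tau$, completing assertion~\ref{item:HypothesisVerification}.

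For assertion~\ref{item:ComparaisonVerification}, assume $\|y-x\|\le\tau R$. One direction tests $\mathcal{E}_\tau$ against the segment: $\mathcal{E}_\tau(x,y)-\mathcal{L}_\tau(x,y)\le\int_0^\tau\big[L\big(x+\tfrac t\tau(y-x),\tfrac{y-x}\tau\big)-L\big(x,\tfrac{y-x}\tau\big)\big]\,dt$, and since $L$ is Lipschitz in $x$ (with constant $C(R)$) on the slab $\|v\|\le R$ while $\|\tfrac t\tau(y-x)\|\le tR$, this is $\le C(R)R\int_0^\tau t\,dt=O(\tau^2)$. For the other direction, let $\gamma$ be the minimizer from $x$ to $y$, so that $\sup_t\|\dot\gamma\|\le R'(R)$; by convexity of $L$ in $v$ and Jensen's inequality, $\mathcal{L}_\tau(x,y)=\tau L\big(x,\tfrac1\tau\int_0^\tau\dot\gamma\big)\le\int_0^\tau L(x,\dot\gamma(t))\,dt$, whence $\mathcal{L}_\tau(x,y)-\mathcal{E}_\tau(x,y)\le\int_0^\tau[L(x,\dot\gamma(t))-L(\gamma(t),\dot\gamma(t))]\,dt$; using that $L$ is Lipschitz in $x$ on the slab $\|v\|\le R'(R)$ and $\|\gamma(t)-x\|\le tR'(R)$, this is again $O(\tau^2)$. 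Combining gives $|\mathcal{E}_\tau(x,y)-\mathcal{L}_\tau(x,y)|\le\tau^2C(R)$, as claimed.
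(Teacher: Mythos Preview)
Your proof is correct and, for item~\ref{item:HypothesisVerification}, follows the paper's route closely: the key step is the a priori velocity bound for Tonelli minimizers (the paper's Lemma~\ref{lemma:AprioriBoundVelocity}), proved by the same energy-conservation argument you give, and (H6) is then obtained by the same endpoint surgery.

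For item~\ref{item:ComparaisonVerification} your argument differs from the paper's in a way worth noting. The paper proves the estimate in one stroke: it upgrades the velocity bound to an \emph{acceleration} bound $\|\ddot\gamma\|\le C(R)$ (via the Euler--Lagrange/Hamiltonian equations), which gives $\|\dot\gamma(s)-(y-x)/\tau\|\le 2\tau C(R)$ and $\|\gamma(s)-x\|\le\tau C(R)$, and then compares $L(\gamma(s),\dot\gamma(s))$ directly with $L(x,(y-x)/\tau)$ using the Lipschitz property of $L$ on the relevant slab. You instead split the two inequalities: $\mathcal{E}_\tau-\mathcal{L}_\tau\le O(\tau^2)$ by testing against the straight segment, and $\mathcal{L}_\tau-\mathcal{E}_\tau\le O(\tau^2)$ by Jensen's inequality (convexity of $L$ in $v$) to pass from $\tau L(x,\tfrac1\tau\int_0^\tau\dot\gamma)$ to $\int_0^\tau L(x,\dot\gamma)$, followed by the Lipschitz bound in $x$ alone. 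Your approach is slightly more elementary in that it uses only the velocity bound $\|\dot\gamma\|\le R'(R)$ and never invokes the Euler--Lagrange structure to control $\ddot\gamma$; the paper's approach is more symmetric and gives the two-sided bound in a single estimate, at the cost of that extra regularity input. Both are valid and yield the same $\tau^2$ rate.
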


We recall two important definitions associated to an interaction:  the discrete Lax-Oleinik operator, and the  discrete weak KAM solution. The vocabulary is chosen so that it coincides to the new terminology used by Fathi in the case of  continuous time Lax-Oleinik operator.

\begin{definition} \label{definition:LaxOleinikOperator}
Let $(E_\tau(x,y))_{\tau>0}$ be a  short-range interaction satisfying (H1)--(H3).
\begin{itemize}
\item We call {\it discrete (backward) Lax-Oleinik operator}, 
\begin{gather*}\label{equation:ForwardBackwardLaxOleinik}
T_\tau [u] (y) := \min_{x \in \mathbb{R}^d} \big\{ u(x) + E_{\tau} (x, y) \big\},\ \quad \forall y\in\mathbb{R}^d,
\end{gather*}
acting on continuous  periodic functions $u \in C^0(\mathbb{R}^d)$. 
\item We call {\it discrete (backward) weak KAM solution for $E_\tau(x,y)$}, any periodic continuous function $u_\tau$ solution of the additive  eigenvalue problem,
\begin{equation}
\label{equation:AdditiveEigenvalueEquation}
T_\tau [u_\tau]  = u_\tau + \bar{E}_\tau, 
\end{equation}
for some $\bar E_\tau \in \mathbb{R}$.
\end{itemize}
Note that $T_\tau$ has the same definition if $E_\tau(x,y)$ is replaced by $E^*_\tau(x,y)$.
\end{definition}

We have defined two Lax-Oleinik operators: the first one  in the continuous case $T^t$ in \eqref{equation:LinearProgrammingPrinciple},  using a superscript $t$, the second one  in the discrete case $T_\tau$  in \eqref{equation:ForwardBackwardLaxOleinik}  using a subscript $\tau$. For the minimal action $\mathcal{E}_\tau(x,y)$ we have obviously $T^\tau = T_\tau$. 

We recall  a classical result on the existence of discrete weak KAM solutions for the Lax-Oleinik operator.  Different  proofs may be found as for instance in \cite{Nussbaum1991}, \cite{Gomes'05} or \cite{Thieullen'11}. 

\begin{prop}[Lax-Oleinik equation for short-range interactions]\label{theorem:AdditiveEigenvalueProblem}
We consider a short-range interaction $(E_\tau(x,y))_{\tau>0}$  satisfying the hypotheses (H1)--(H3). 
\begin{enumerate}
\item For every  $\tau > 0$, there exists a unique scalar $\bar{E}_\tau$  such that the equation  $T_\tau [u_\tau]  = u_\tau + \bar{E}_\tau$ admits  a continuous periodic solution $u_\tau$.
\item $\bar E_\tau$ is called effective interaction and can be computed in many ways
\begin{align}
\label{equation:GroundState}
\begin{split}
\bar{E}_\tau &= \sup_{u\in C^0(\mathbb{T}^d)} \ \inf_{x,y \, \in\, \mathbb{R}^d} \ \big\{ E_\tau(x,y) - [u(y)-u(x)] \big\}, \\
&= \sup_{v\in \mathcal{B}(\mathbb{R}^d)} \ \inf_{x,y \, \in\, \mathbb{R}^d} \ \big\{ E_\tau(x,y) - [v(y)-v(x)] \big\}, \\
&= {\textstyle \lim_{k\rightarrow +\infty} \ \inf_{z_0, \ldots, z_k \, \in\, \mathbb{R}^d} \  \frac{1}{k} \sum_{i=0}^{k-1} E_{\tau}(z_i,z_{i+1})}.
\end{split}
\end{align}
\end{enumerate}
$\mathcal{B}(\mathbb{R}^d)$ denotes the space of bounded  functions not necessarily periodic. Note that we could have used $E^*_\tau(x,y)$ instead of $E_\tau(x,y)$ in one of these formulas.
\end{prop}
The two first formulas are called the {\it sup-inf formula} and are analogue to the sup-inf formula  introduced by \cite{Contreras'98} for continuous-time Tonelli Hamiltonian systems. The third formula is called the {\it mean interaction per site formula}. Another characterization will be given in lemma~\ref{lemma:ErgodicEffectiveInteraction}.

The conclusions of proposition  \ref{theorem:AdditiveEigenvalueProblem} hold for both the discrete and the minimal action. There is no reason a priori that the two effective interactions $\bar{\mathcal{L}}_\tau$ and $\bar{\mathcal{E}}_\tau$ are comparable. The mean interaction per site formula suggests to consider minimizing paths $(z_0,\cdots,z_k)$. The following proposition shows that the jumps $\|z_k-z_{k-1}\|$ of such minimizing paths are uniformly comparable to $\tau$. We will be able to apply the proposition \ref{proposition:ActionComparison} and obtain $| \bar{\mathcal{L}}_\tau - \bar{\mathcal{E}}_\tau| = O(\tau^2)$.

\begin{prop}[A priori compactness for short-range interactions] \label{proposition:AprioriCompactnessLaxOleinik}
We consider a  short-range interaction  $(E_\tau(x,y))_{\tau>0}$ satisfying the hypotheses (H1)--(H6). 
\begin{enumerate}
\item There exist constants $C,R>0$ such that, if $\tau \in (0,1]$ and $u_\tau$ is a discrete weak KAM solution of $E_\tau(x,y)$, then
\begin{enumerate}
\item\label{item:AprioriLipschitzBound}  $u_\tau$ is  Lipschitz and $\text{\rm Lip}(u_\tau) \leq C$,
\item\label{item:AprioriJumpBound} $\forall y \in \mathbb{R}^d$,   $  \ x  \in \argmin_{x \in \mathbb{R}^d} \big\{u_\tau(x) + E_\tau(x,y) \big\} \  \ \Rightarrow \ \ \|y-x\| \leq \tau R$. 
\end{enumerate}

\item \label{item:AprioriLaxOleinikBound} For every Lipschitz periodic function $u$, $\lim_{\tau\to0} T_\tau[u] = u$ uniformly. More precisely, for every constant $\kappa>0$, there exist constants $R_\kappa, C_\kappa >0$ such that, if $u$ is any Lipschitz function satisfying $\text{Lip}(u) \leq \kappa$, and   $\tau \in (0,1]$, then
\begin{enumerate}
\item $\forall y \in \mathbb{R}^d$, $ \ x \in \argmin_{x\in\mathbb{R}^d} \big\{ u(x) + E_\tau(x,y) \big\} \  \ \Rightarrow \ \ \|y-x \| \leq \tau R_\kappa$, 
\item $\| \, T_\tau[u]-u \, \|_\infty \leq \tau C_\kappa$.
\end{enumerate}
\end{enumerate}
\end{prop}
Notice that the effective Hamiltonian \eqref{equation:EffectiveHamiltonian} can be written in the terminology of short-range interactions using the minimal action,
\begin{gather*} \label{equation:ConvolutionInteraction}
- \bar H = \lim_{\tau \to +\infty} \frac{1}{\tau} \min_{x,y \in \mathbb{R}^d} \mathcal{E}_\tau(x,y).
\end{gather*}
We show more generally how to solve equation \eqref{equation:ErgodicCellEquation} and how to obtain formula \eqref{equation:EffectiveHamiltonian} for any short-range interaction which is a min-plus convolution semi-group. 

\begin{definition} \quad
\begin{itemize}
\item We call {\it min-plus convolution} of two interactions $\mathcal{E}_1$ and $\mathcal{E}_2$, the interaction
\[
\mathcal{E}_1 \otimes \mathcal{E}_2 (x,y) := \inf_{z \in \mathbb{R}^d} [ \mathcal{E}_1(x,z) + \mathcal{E}_2(z,y) ], \quad \forall x,y \in \mathbb{R}^d.
\]
\item A short-range interaction $(E_\tau(x,y))_{\tau>0}$ is said to be a {\it min-plus convolution semi-group} if
\[
E_{\tau+\sigma} = E_\tau \otimes E_\sigma, \quad \forall \tau,\sigma >0.
\]
\end{itemize}
\end{definition}

The following observation is trivial and will not be proved.

\begin{lemma}
Let $H$ be a Tonelli Hamiltonian. Then the minimal action $(\mathcal{E}_\tau(x,y))_{\tau>0}$ is a min-plus convolution semi-group.
\end{lemma}

The following proposition extends \eqref{equation:ErgodicCellEquation} and \eqref{equation:EffectiveHamiltonian} for any short-range interaction which is a min-plus convolution semi-group. The proposition states there exists  a common additive eigenfunction  associated to a unique linear eigenvalue. 

\begin{prop}\label{theorem:FathiWeakKamConvolutionTheorem}
Let $(E_\tau(x,y))_{\tau > 0}$ be a short-range interaction satisfying (H1)--(H6). Assume  the interaction is a min-plus convolution semi-group.  Consider the equation
\begin{equation} \label{equation:FathiWeakKamConvolutionTheorem}
T_\tau[u] = u + \tau \bar{E}_1, \quad \forall \tau>0,
\end{equation}
where $u$ is a $C^0$ periodic function (independent of $\tau$) and $\bar E_1 \in \mathbb{R}$.
\begin{enumerate}
\item\label{item1:FathiWeakKamConvolutionTheorem} There exists a Lipschitz periodic function $u$ solution of \eqref{equation:FathiWeakKamConvolutionTheorem}.  Moreover
\[
\bar E_\tau = \tau \bar E_1, \quad \forall \tau>0.
\]
\item \label{item2:FathiWeakKamConvolutionTheorem} Let $u_\tau$ be any discrete weak KAM solution of $E_\tau(x,y)$. Assume $u_{\tau_i} \to u$ uniformly along a subsequence $\tau_i\to0$. Then $u$ is a Lipschitz solution of \eqref{equation:FathiWeakKamConvolutionTheorem}.
\item\label{item3:FathiWeakKamConvolutionTheorem} $\displaystyle{\lim_{\tau \to+\infty} \frac{1}{\tau} \min_{x,y\in\mathbb{R}^d}E_\tau(x,y) = \bar{E}_1}$. 
\end{enumerate}
\end{prop}

We summarize in the following theorem the previous results we have obtained for any short-range interactions to the particular case of discrete and minimal actions. We show how the PDE cell equation \eqref{equation:PDEcellEquation} can be approximated by a discrete weak KAM solution $u_\tau$. The speed of convergence to the effective Hamiltonian $\bar H$ is of the order $O(\tau)$. The convergence to the viscosity solution $u$ is obtained by taking a subsequence as $\tau \to 0$.

\begin{theorem}[First approximation scheme]\label{theorem:Main}
Let $H(x,p):\mathbb{T}^d\times\mathbb{R}^d \to \mathbb{R}$ be a Tonelli  Hamiltonian and $L(x,v)$ be the associated Lagrangian. We consider the two equations
\begin{align}
&u_\tau(y) + \bar{\mathcal{L}}_\tau = \min_{x\in\mathbb{R}^d} \big\{ u_\tau(x) + \mathcal{L}_\tau(x,y) \big\}, \quad \forall y \in \mathbb{R}^d, \ \forall \tau>0, \tag{E1}\label{equation:DiscreteTimeLaxOleinikEquation} \\
&u(y)  - \tau \bar H = \min_{x\in\mathbb{R}^d} \big\{ u(x) + \mathcal{E}_\tau(x,y) \big\}, \quad \forall y \in \mathbb{R}^d, \ \forall \tau>0, \tag{E2} \label{equation:ContinuousTimeLaxOleinkEquation}
\end{align}
where $u_\tau,u$ are $C^0$ periodic functions. 
\begin{enumerate}
\item\label{item1:MainTheorem} There is a unique $\bar{\mathcal{L}}_\tau$ such that  \eqref{equation:DiscreteTimeLaxOleinikEquation} admits a solution $u_\tau$. Moreover
\begin{gather*}
\label{equation:GroundState}
\bar{\mathcal{L}}_\tau  = \lim_{k\rightarrow +\infty} \ \inf_{z_0, \ldots, z_k \, \in\, \mathbb{R}^d} \  \frac{1}{k} \sum_{i=0}^{k-1} \mathcal{L}_{\tau}(z_i,z_{i+1}). 
\end{gather*} 
\item \label{item2:MainTheorem} There is a unique $\bar H$ such that \eqref{equation:ContinuousTimeLaxOleinkEquation} admits a solution $u$. Moreover
\begin{gather*} \label{equation:ConvolutionInteraction}
- \bar H = \lim_{\tau \to +\infty} \frac{1}{\tau} \min_{x,y \in \mathbb{R}^d} \mathcal{E}_\tau(x,y).
\end{gather*}
\item\label{item3:MainTheorem}
There exists a constant $C>0$ such that 
\[
\Big| \frac{\bar{\mathcal{L}}_\tau}{\tau} + \bar H \Big| \leq C \tau, \quad \forall \tau \in (0,1].
\]
\item\label{item4:MainTheorem}
There exist  constants $C,R>0$ such that, for every $\tau\in (0,1]$ and for every  solution  $v=u_\tau$  of \eqref{equation:DiscreteTimeLaxOleinikEquation},  or $v=u$ of \eqref{equation:ContinuousTimeLaxOleinkEquation},
\begin{enumerate}
\item $\text{Lip}(v) \leq  C$, in particular $\|v\|_\infty \leq C$ if $\min(v)=0$,
\item $\forall y\in\mathbb{R}^d$, if $ x \in \argmin_{x \in \mathbb{R}^d} \big\{ v(x) + E_\tau(x,y) \big\}$ then $ \| y-x\| \leq \tau R$.
\end{enumerate}
\item\label{item5:MainTheorem} There exist a subsequence  $\tau_i \to 0$  and a subsequence $u_{\tau_i}$ solution of \eqref{equation:DiscreteTimeLaxOleinikEquation}  such that $u_{\tau_i} \to u$ uniformly. Moreover every such  $u$ is a solution of\eqref{equation:ContinuousTimeLaxOleinkEquation}.
\end{enumerate}
\end{theorem}

Theorem \ref{theorem:Main} is proved in section \ref{section:ProofFirstTheorem}. The convergence of the discrete solution  to the solution of the ergodic cell equation has been addressed by Gomes \cite{Gomes'05} and Camilli, Cappuzzo-Dolcetta, Gomes \cite{CamCapGom'08}, but their proofs require a particular form of the Lagrangian that we do not assume. Several other numerical schemes have been studied for computing the effective Hamiltonian, see \cite{Gomes'04},  \cite{Rorro'06},  \cite{FalconeRorro'10} but the properties {\it (i)--(v)} are not stated explicitly, see also \cite{BouillardFaouZavidovique} for a mechanical Lagrangian of the  form $L(t,x,v) = W(v)+V(t,x)$. 

Note that the  discrete (backward) Lax-Oleinik equation \eqref{equation:AdditiveEigenvalueEquation} possesses a second form: the {\it discrete forward Lax-Oleinik equation},
\begin{equation*}
u_\tau(x) - \bar{E}_\tau =\max_{y\in\mathbb{R}^d} \big\{ u_\tau(y) - E_\tau(x,y) \big\}, \quad \forall x \in \mathbb{R}^d.
\end{equation*}
Theorem \ref{theorem:Main} is also valid for the forward Lax-Oleinik equation with the same effective interaction $\bar{E}_\tau$ and possibly a different solution  $u_\tau$ that is called {\it discrete forward weak KAM}. From now on we only study the backward problem.

Our second objective is to show, by introducing a discounted factor $\delta$ in the discrete Lax-Oleinik equation \eqref{equation:AdditiveEigenvalueEquation}, that we do not need to take a subsequence in time to obtain a solution of  the PDE cell equation.   A discrete version of \cite{DaviniFathiIturriagaZavidovique2014} is also proved in \cite{DFIZ2} but they  do not study the convergence issues as $\tau \to 0$. Some related results can be found in \cite{Ishii'14,Mitake'14} with a different setting.

Our approach is actually more general and applies to any short-range interaction. We first extend the definition of  the Lax-Oleinik operator.
\begin{definition}
Let $(E_\tau(x,y))_{\tau > 0}$ be a short-range interaction  satisfying (H1)--(H3). We call {\it discounted  discrete Lax-Oleinik operator}, the non-linear operator
\[
T_{\tau,\delta}[u](y) := \inf_{x\in\mathbb{R}^d} \big\{ (1-\tau\delta)u(x) + E_\tau(x,y) \big\}, \quad \forall y \in \mathbb{R}^d,
\]
defined for every $C^0$ periodic function $u$, for every $\tau > 0$ and $\delta \in (0,1]$. By coerciveness the infimum is actually attained. As before we don't change $T_{\tau,\delta}$ by using the periodic interaction $E^*_\tau(x,y)$ instead of $E_\tau(x,y)$.
\end{definition}

It is easy to show that  $T_{\tau,\delta}$ admits a unique fixed point $u_{\tau,\delta}$ that we  call {\it discounted discrete weak KAM solution}. On the other hand, it is not so easy to show  it possesses  uniform estimates  as in  proposition \ref{proposition:AprioriCompactnessLaxOleinik}, 

\begin{prop}[A priori compactness in the discounted case] \label{proposition:AprioriBoundDiscountedLaxOleinik}
Let $(E_\tau(x,y))_{\tau > 0}$ be  a  short-range interaction  satisfying  (H1)--(H6). Then there exist constants $R>1$ and $C >0$ such that, for every $\tau,\delta \in (0,1]$, 
\begin{enumerate}
\item \label{item:AprioriBoundDiscountedLaxOleinik1} $T_{\tau,\delta}$ admits a unique fixed point $u_{\tau,\delta}$ which is $C^0$ periodic,
\begin{equation*} \label{equation:DiscountedDiscreteWeakKAMsolution}
u_{\tau,\delta}(x) :=  \inf_{\substack{  (x_{-k} )_{k=0}^{+\infty} \in (\mathbb{R}^d)^{\mathbb{N}}, \, x_0=x}} \,  \sum_{k=0}^{\infty} (1-\tau\delta)^k E_{\tau}(x_{-(k+1)}, x_{-k}), \quad \forall x \in \mathbb{R}^d.
\end{equation*}
\item \label{item:AprioriBoundDiscountedLaxOleinik2} $\inf_{x,y \in \mathbb{R}^d}\dfrac{E_\tau(x,y)}{\tau\delta} \leq u_{\tau,\delta} \leq \sup_{x \in \mathbb{R}^d} \dfrac{E_\tau(x,x)}{\tau\delta}$,
\item \label{item:AprioriBoundDiscountedLaxOleinik3} $u_{\tau,\delta}$ is uniformly Lipschitz with  $\text{\rm Lip}(u_{\tau,\delta}) \leq C$, 
\item \label{item:AprioriBoundDiscountedLaxOleinik4} $\forall y\in\mathbb{R}^d$,\quad $ x\in\argmin_{x\in\mathbb{R}^d}\big\{ (1-\tau\delta)u_{\tau,\delta}(x) + E_\tau(x,y) \big\} \  \ \Rightarrow\ \|y-x\| \leq \tau R$.
\end{enumerate}
A configuration $(x_{-k})_{k=0}^\infty$ realizing the infimum in \eqref{item:AprioriBoundDiscountedLaxOleinik1} is called discounted backward calibrated configuration. Such a configuration is also calibrated for the periodic interaction $E^*_\tau(x,y)$ instead of $E_\tau(x,y)$.
\end{prop}

As in \cite{DaviniFathiIturriagaZavidovique2014} we  characterize the limit of the unique fixed point  of $T_{\tau,\delta}$ in terms of  minimizing plan, Ma\~n\'e potential. We recall these two definitions, see  \cite{Thieullen'11} for more details. 
We consider here the projection on $\mathbb{T}^d\times \mathbb{T}^d$ of objects that should be defined on $\mathbb{R}^d\times \mathbb{R}^d$ if cohomology is needed.

\begin{definition} \label{definition:PeriodicTransshipment}
A probability measure $\pi$ defined on $\mathbb{T}^d \times \mathbb{T}^d$  is said to be a {\it stationary plan} if $pr^1_*(\pi) = pr^2_*(\pi)$. (We denote by $pr^1,pr^2 : \mathbb{T}^d \times \mathbb{T}^d \to \mathbb{T}^d$, the two canonical projections).
\end{definition}

\begin{definition}\label{definition:ManePotential}
We call {\it periodic Ma\~n\'e potential}, the doubly periodic function
\begin{equation*}
\Phi^*_\tau(x,y) := \inf_{n\geq 1}  \inf_{\substack{(x_0, \ldots, x_n)\in(\mathbb{R}^d)^{n+1} \\ x_0= x, \ x_n = y}} \sum_{k=0}^{n-1} \big[ E^*_\tau(x_k,x_{k+1}) -\bar E_\tau \big], \quad \forall ~x, y\in \mathbb{R}^d.
\end{equation*}
\end{definition}

We recall how the effective Hamiltonian can be computed using stationary plan. See \cite{BernardBuffoni2007,Thieullen'11} for a proof. 
\begin{lemma} \label{lemma:ErgodicEffectiveInteraction}
Let $(E_\tau(x,y))_{\tau>0}$ be a  short-range  interaction satisfying  (H1)--(H3).  Let $E^*_\tau(x,y)$ be the associated periodic interaction. Then 
\begin{gather*}
\bar{E}_\tau = \inf \Big\{ \iint_{\mathbb{T}^d \times\mathbb{T}^d} E_\tau^*(x,y) \,\pi(dx,dy) \,:\, \pi \ \text{is a stationary plan} \ \Big\}.
\end{gather*}
\end{lemma}
Note that the infimum in lemma~\ref{lemma:ErgodicEffectiveInteraction} can be realized by compactness. We recall several classical notions.  See \cite{BernardBuffoni2007,Thieullen'11} for two distinct approaches.

\begin{definition}
Let $\pi$ be  a stationary plan on $\mathbb{T}^d \times \mathbb{T}^d$.
\begin{itemize}
\item $\pi$ is said to be {\it minimizing} if  it realizes the infimum in lemma~\ref{lemma:ErgodicEffectiveInteraction}. Define
\[
\mathcal{M}^*(E_\tau) := \{ \pi : \ \text{$\pi$ is a  minimizing plan} \}.
\]
\item $\pi$ is said to be  {\it extremal} if it is  minimizing and cannot be written as a strict  barycenter $\pi = \alpha \pi_1 +(1-\alpha)\pi_2$ of  minimizing plan,  $\pi_1$ and $\pi_2$,  with $\alpha \in (0,1)$, $\pi_1 \not= \pi_2$.
\item We call {\it Mather set}, the compact set in $\mathbb{T}^d \times \mathbb{T}^d$
\[
\text{\rm Mather}^*(E_\tau) := \overline{\cup \{\textrm{\rm supp}(\pi) : \pi \in \mathcal{M}^*(E_\tau) \}}.
\]
We call  {\it projected Mather set}, the set $pr^1(\text{\rm Mather}^*(E_\tau) )$.
\item We call {\it Aubry set}, the compact set in $\mathbb{T}^d \times \mathbb{T}^d$
\[
\text{\rm Aubry}^*(E_\tau) := \big\{ (x,y) \in \mathbb{T}^d \times \mathbb{T}^d : E^*_\tau(x,y) - \bar E_\tau +  \Phi_\tau^*(y,x)=0 \big\}.
\]
We call  {\it projected Aubry set}, the set $pr^1(\text{\rm Aubry}^*(E_\tau))$. 
\item We call {\it Aubry class}, a class of the equivalence relation  on $pr^1(\text{\rm Aubry}^*(E_\tau))$,
\[
x \sim y \Longleftrightarrow \Phi_\tau^*(x,y)+\Phi_\tau^*(y,x)=0.
\]
\end{itemize}
\end{definition}

We can show (see \cite{Thieullen'11} in the discrete setting). 

\begin{lemma} \label{lemma:ResultsManePotential}
Let $(E_\tau(x,y))_{\tau>0}$ be a  short-range  interaction satisfying  (H1)--(H3). Then
\begin{enumerate}
\item \label{item1:ResultsManePotential} $\Phi^*_\tau(x,y)$ is continuous with respect to $(x,y)$,
\item \label{item2:ResultsManePotential}  $pr^1(\text{\rm Aubry}^*(E_\tau)) = \big\{ x \in \mathbb{T}^d : \Phi_\tau^*(x,x)= 0 \big\}$,
\item \label{item3:ResultsManePotential} For any Aubry class $A$, $ \forall x,y,z \in A$,  $\Phi_\tau^*(x,y)+\Phi_\tau^*(y,z) = \Phi_\tau^*(x,z)$,
\item \label{item4:ResultsManePotential} $\text{\rm Mather}^*(E_\tau) \subset \text{\rm Aubry}^*(E_\tau)$,
\item \label{item5:ResultsManePotential} $\forall x \in pr^1(\text{\rm Aubry}^*(E_\tau))$, \  $ y \mapsto \Phi^*_\tau(x,y)$ is a discrete weak KAM solution, 
\item \label{item6:ResultsManePotential} \textbf{(representation formula)} if $u_\tau$ is any  discrete weak KAM solution, then
\[
u_\tau(y) = \min_{x \in pr^1(\text{\rm Mather}^*(E_\tau))} \{ u(x) + \Phi_\tau^*(x,y) \}, \quad \forall y \in \mathbb{R}^d.
\]
\end{enumerate}
\end{lemma}

The following lemma gives a new type of discrete weak KAM solution. Though it is simple to prove, the lemma is new and  justifies a priori the notion of balanced weak KAM solution. 

\begin{lemma} \label{lemma:AubryClassWeakKAM}
Let $(E_\tau(x,y))_{\tau >0}$ be  a  short-range interaction  satisfying  (H1)--(H3). Let $\pi$ be an extremal plan. Let $\mu = pr^1_*(\pi)$.
\begin{enumerate}
\item \label{item1:AubryClassWeakKAM} $\text{\rm supp}(\mu)$ belongs to an  Aubry class.
\item \label{item2:AubryClassWeakKAM} $y \mapsto \int \Phi_\tau^*(z,y) \, \mu(dz)$ is a discrete weak KAM solution.
\item \label{item3:AubryClassWeakKAM} $\iint \Phi_\tau^*(x,y) \, \mu(dx)\mu(dy) = 0$.
\end{enumerate}
\end{lemma}

By taking supremum or  infimum of discrete weak KAM solutions, we obtain again a discrete weak KAM solution. The balanced weak KAM solution \eqref{equation:AsymptoticallyDiscountedSolution}  is of this type. 

\begin{prop}  \label{proposition:BalancedWeakKAMsolution}
Define $\displaystyle{u_\tau^*(x) :=  \inf \Big\{ \int_{\mathbb{T}^d}\! \Phi^*_\tau(z,x) \, pr_*^1(\pi)(dz) : \pi \in \mathcal{M}^*(E_\tau) \Big\}}$. Then
\begin{enumerate}
\item \label{item1:BalancedWeakKAMsolution} $u_\tau^*$ is  a discrete weak KAM solution, 
\item \label{item2:BalancedWeakKAMsolution} 
$ u_\tau^*(y) = \sup \Big\{ w(y) : w +\bar E_\tau = T_\tau[w], \
\int_{\mathbb{T}^d } \! w(x) \, pr^1_*(\pi) (dx) \leq 0, \ \forall \pi \in \mathcal{M}^*(E_\tau) \Big\}$,

\item \label{item3:BalancedWeakKAMsolution} $\sup\{ \int u^*_\tau (y)\  pr^1_*(\pi) (dy)~:~ \pi \text{ is an extremal plan} \} =0$.
\end{enumerate}
$u_\tau^*$ is called {\it balanced discrete weak KAM solution}.
\end{prop}

The following  proposition extends to short-range interactions  the main result obtained by \cite{DaviniFathiIturriagaZavidovique2014} in the continuous case and by \cite{DFIZ2}  in the discrete case. 

\begin{prop}\label{proposition:DiscountedSelectionPrinciple}
Let $(E_\tau(x,y))_{\tau >0}$ be  a  short-range interaction  satisfying  (H1)--(H3).  Let $u_\tau^*$ be the balanced discrete weak KAM solution defined in proposition \ref{proposition:BalancedWeakKAMsolution}. Then,
\begin{gather*}
\displaystyle{\forall \tau \in (0,1], \quad \lim_{\delta \to 0} \Big(  u_{\tau,\delta} - \frac{\bar E_\tau}{\tau\delta} \Big) = u_\tau^*}, \quad \text{in the $C^0$ topology}.
\end{gather*}
\end{prop}

We summarize in the  following theorem the approximation scheme we have obtained in the case of the discrete action $\mathcal{L}_\tau(x,y)$.

\begin{theorem}[Second approximation scheme]\label{theorem:SelectionPrinciple}
Let $H(x,p)$ be a Tonelli Hamiltonian, and $L(x,v)$ be the associated Lagrangian. Let $u_{\tau,\delta}$ and $u_\delta$ be the unique $C^0$ periodic solutions of
\begin{align}
&u_{\tau,\delta}(y) = \min_{x \in \mathbb{R}^d} \big\{ (1-\tau\delta) u_{\tau,\delta}(x) + \mathcal{L}_\tau(x,y) \big\}, \quad\forall y\in\mathbb{R}^d, \ \forall \tau,\delta \in (0,1], \tag{E1} \label{equation:SelectionPrinciple1} \\
&u_\delta(y) = \inf_{\substack{\gamma \in C^{2}((-t,0],\mathbb{R}^d) \\ \gamma(0)=y}} \Big\{ e^{-t\delta}  u_\delta( \gamma(t)) + \int_{-t}^0\!\! e^{s\delta} L(\gamma(s),\dot\gamma(s)) \, ds  \Big\}, \ \forall y \in \mathbb{R}^d, \  t>0. \tag{E2}  \label{item3:SelectionPrinciple} 
\end{align}
Consider the equations with $C^0$ periodic unknowns $u_\tau$ and $u$,
\begin{align}
&u_{\tau}(y) +\bar{\mathcal{L}}_\tau = \min_{x \in \mathbb{R}^d} \big\{  u_{\tau}(x) + \mathcal{L}_\tau(x,y) \big\}, \quad\forall y\in\mathbb{R}^d, \ \forall \tau  \in (0,1], \tag{E3} \label{equation:SelectionPrinciple1} \\
&u(y)  - t \bar H = \min_{x\in\mathbb{R}^d} \big\{ u(x) + \mathcal{E}_t(x,y) \big\}, \quad \forall y \in \mathbb{R}^d, \ \forall t>0. \tag{E4} \label{equation:SelectionPrinciple4}
\end{align}
\begin{enumerate}
\item \label{item1:SelectionPrinciple} Let $\delta \in (0,1]$, $x\in\mathbb{R}^d$. Let $(x_{-n}^{\tau,\delta})_{n\geq0}$ be a backward calibrated configuration for the equation \eqref{equation:SelectionPrinciple1} starting at $x_0^{\tau,\delta}=x$. Let $\gamma_{\tau,\delta}(t)$ be the piecewise linear approximation satisfying $\gamma_{\tau,\delta}(-n\tau) = x_{-n}^{\tau,\delta}$. Then there exists a sequence $\tau_i\to0$ such that 
\begin{enumerate}
\item $\gamma_{\tau_i,\delta}(t) \to \gamma_\delta(t)$ uniformly on every compact subset of $(-\infty,0]$,
\item $\gamma_\delta \in C^{2}((-\infty,0],\mathbb{R}^d)$, \ $\|\dot\gamma_\delta \|_\infty \leq C$, \ $\| \ddot\gamma_\delta \|_\infty \leq C$
\item $u_\delta(x) = e^{-t\delta}u_\delta(\gamma_\delta(-t)) + \int_{-t}^0 e^{s\delta}L(\gamma_\delta(s),\dot\gamma_\delta(s)) \, ds, \quad \forall t\geq0$.
\end{enumerate}
\item \label{item2:SelectionPrinciple} There exists constants $C>0, R>1$ such that for every $\tau,\delta \in (0,1]$,
\begin{enumerate}
\item \label{item21:SelectionPrinciple} $u_{\tau,\delta}$ is uniformly Lipschitz with  $\text{\rm Lip}(u_{\tau,\delta}) \leq C$, 
\item \label{item22:SelectionPrinciple} $\forall y\in\mathbb{R}^d$,\quad $ x\in\argmin_{x\in\mathbb{R}^d}\big\{ (1-\tau\delta)u_{\tau,\delta}(x) + \mathcal{L}_\tau(x,y) \big\} \  \ \Rightarrow\ \|y-x\| \leq \tau R$,
\item \label{item23:SelectionPrinciple}  $\displaystyle{\| u_{\tau,\delta} - u_\delta \|_\infty \leq C \frac{\tau}{\delta} \quad\text{and}\quad  \Big\| \ \Big(u_{\tau,\delta}- \frac{\bar{\mathcal{L}_\tau}}{\tau\delta}\Big) - \Big( u_\delta+\frac{\bar H}{\delta} \Big) \ \Big\|_\infty \leq C\frac{\tau}{\delta}}$.
\end{enumerate}
\item \label{item3:SelectionPrinciple} Let $\tau \in (0,1]$ and $u_\tau^*$ be defined in proposition \ref{proposition:BalancedWeakKAMsolution}. Then
\begin{gather*}
\lim_{\delta \to 0} \Big(u_{\tau,\delta}-\frac{\bar{\mathcal{L}_\tau}}{\tau\delta} \Big) = u_\tau^*, \quad   \text{in the $C^0$ topology}.
\end{gather*}
\item \label{item4:SelectionPrinciple} Let $u^*$ be the solution of \eqref{equation:SelectionPrinciple4} defined by \eqref{equation:AsymptoticallyDiscountedSolution}. Then
\[
\lim_{\substack{\tau \to 0, \ \delta \to 0\\ \tau/\delta \to 0}} \Big( u_{\tau,\delta}- \frac{\bar{\mathcal{L}_\tau}}{\tau\delta} \Big) = u^*, \quad \text{in the $C^0$ topology}.
\]
\end{enumerate}
\end{theorem}

Theorem \ref{theorem:SelectionPrinciple}  is proved in section \ref{section:ProofSecondTheorem}. Item  \eqref{item1:SelectionPrinciple} shows how to obtain a $C^2$ minimizer in the continuous discounted case from a discrete calibrated configuration,  item \eqref{item2:SelectionPrinciple} improves similar estimates in \cite{Rorro'06,FalconeRorro'10,BouillardFaouZavidovique}. Item \eqref{item3:SelectionPrinciple} generalizes \cite{DFIZ2} and is a particular case of proposition \ref{proposition:DiscountedSelectionPrinciple},  item \eqref{item4:SelectionPrinciple} is a corollary of \eqref{item23:SelectionPrinciple} and \cite{DaviniFathiIturriagaZavidovique2014}.

\section{First approximation scheme}
\label{section:ProofFirstTheorem}

This section is devoted to the proof of theorem \ref{theorem:Main} and the necessary tools presented  before. The a priori estimates in proposition \ref{proposition:ActionComparison} are easy to prove for Tonelli Hamiltonian. We recall the following result that we admit, see \cite{Fathi'08, Mather'91} in the autonomous case, and \cite{BouillardFaouZavidovique} in the non autonomous case for more details.

\begin{lemma}[A priori compactness for minimizers] \label{lemma:AprioriBoundVelocity}
Let $H(x,p) : \mathbb{T}^d\times\mathbb{R}^d \to \mathbb{R}$ be a Tonelli Hamiltonian. For every $R>0$, there exists a constant $C(R)>0$ such that, for every $\tau>0$, $x,y \in \mathbb{R}^d$ satisfying $\|y-x\| \leq \tau R$, and for every minimizer $\gamma : [0,\tau] \to \mathbb{R}^d$ satisfying
\[
\gamma(0)=x, \ \gamma(\tau)=y, \ \int_0^\tau\! L(\gamma(s),\dot \gamma(s)) \,ds = \mathcal {E}_\tau(x,y),
\]
we have $\|\dot \gamma\| \leq C(R)$  and $\|\ddot\gamma\| \leq C(R)$.
\end{lemma}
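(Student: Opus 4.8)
The plan is to derive the velocity bound first and then the acceleration bound, both by purely local (Tonelli-type) arguments. For the velocity bound, the starting point is that a minimizer $\gamma:[0,\tau]\to\mathbb{R}^d$ of $\mathcal{E}_\tau(x,y)$ with $\|y-x\|\le\tau R$ has average speed at most $R$, hence there is a time $t_0\in[0,\tau]$ with $\|\dot\gamma(t_0)\|\le R$ (mean value theorem applied to each coordinate, or more simply to $\|\gamma(\tau)-\gamma(0)\|$). First I would compare $\gamma$ against the affine path $s\mapsto x+s\frac{y-x}{\tau}$ to get $\int_0^\tau L(\gamma,\dot\gamma)\,ds\le \tau\sup_{x\in\mathbb{T}^d,\|v\|\le R}L(x,v)=:\tau A(R)$, and combine this with the superlinearity lower bound $L(x,v)\ge \theta(\|v\|)$, where $\theta$ is a fixed convex superlinear function independent of $x$ (by (L2)); this already bounds $\int_0^\tau \theta(\|\dot\gamma\|)\,ds$ and hence controls the ``time spent at high speed'', but not the pointwise speed. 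To upgrade this to a pointwise bound I would use the fact that $\gamma$ solves the Euler--Lagrange equation, which along the compact energy level gives conservation of the energy $E(x,v)=\partial_v L(x,v)\cdot v-L(x,v)$; evaluating at $t_0$ where $\|\dot\gamma(t_0)\|\le R$ bounds the energy by a constant $E_1(R)$, and superlinearity of $L$ (equivalently the fact that $E(x,v)\to+\infty$ as $\|v\|\to\infty$ uniformly in $x$, which follows from (L1)--(L2)) then forces $\|\dot\gamma(t)\|\le C_1(R)$ for all $t$.

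Once $\|\dot\gamma\|\le C_1(R)$ is known, the acceleration bound is routine: the Euler--Lagrange equation in the form $\frac{d}{dt}\big(\partial_v L(\gamma,\dot\gamma)\big)=\partial_x L(\gamma,\dot\gamma)$ can be rewritten, using (L1) (positive definiteness of $\partial_{vv}^2L$, hence invertibility uniformly on the compact set $\{\|v\|\le C_1(R)\}$), as $\ddot\gamma=(\partial_{vv}^2L)^{-1}\big(\partial_x L-\partial_{vx}^2L\cdot\dot\gamma\big)$; all quantities on the right are continuous functions of $(\gamma,\dot\gamma)$ evaluated on the compact set $\mathbb{T}^d\times\{\|v\|\le C_1(R)\}$ (periodicity in $x$ lets us reduce to $\mathbb{T}^d$), so the right side is bounded by a constant $C_2(R)$. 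Taking $C(R):=\max(C_1(R),C_2(R))$ finishes the proof.

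The main obstacle is the passage from the integral speed bound to the pointwise speed bound $\|\dot\gamma(t)\|\le C_1(R)$ uniformly in $\tau$; the subtlety is that $\tau$ can be small, so ``averaging'' arguments that lose a factor of $\tau$ are useless and one genuinely needs the conserved-energy (or, alternatively, the Tonelli ``no conjugate points on short intervals'' / a priori compactness) mechanism, which is where (L1) is used to ensure the energy is a proper function of $v$. I would present this step carefully and cite \cite{Fathi'08, Mather'91} for the autonomous case and \cite{BouillardFaouZavidovique} for the technique in the time-dependent setting, since the argument there specializes immediately.
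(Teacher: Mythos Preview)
Your overall strategy matches the paper's: compare with the affine competitor to bound the action by $\tau A(R)$, use superlinearity of $L$ to find a single time where the speed is bounded, invoke conservation of energy to propagate this to all times, and then read off the acceleration bound from the Euler--Lagrange equations restricted to a compact velocity set. The paper phrases the energy step on the Hamiltonian side (bounding $H(\gamma,p)$, then $\|p\|$ via superlinearity of $H$, then $\|\dot\gamma\|=\|\partial_pH\|$), while you work on the Lagrangian side with $E(x,v)=\partial_vL\cdot v-L$; these are equivalent under the Legendre transform.

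There is, however, one genuine slip. Your claim that $\|y-x\|\le\tau R$ yields a time $t_0$ with $\|\dot\gamma(t_0)\|\le R$ via the mean value theorem is false in dimension $d\ge 2$: take $\gamma(t)=(\cos t,\sin t)$ on $[0,2\pi]$, where the endpoints coincide ($R=0$) but $\|\dot\gamma\|\equiv 1$. Componentwise MVT only gives different times $t_i$ for different coordinates, and applying MVT to $t\mapsto\|\gamma(t)-\gamma(0)\|$ bounds only a projection of $\dot\gamma$, not its norm. The fix is already in your own write-up: from $\int_0^\tau L(\gamma,\dot\gamma)\,ds\le\tau A(R)$ and the linear lower bound $L(x,v)\ge\|v\|-C_2$ (a consequence of superlinearity) you get $\frac{1}{\tau}\int_0^\tau\|\dot\gamma\|\,ds\le A(R)+C_2$, and hence there exists $s_0$ with $\|\dot\gamma(s_0)\|\le A(R)+C_2$. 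This is exactly how the paper produces the anchor point for the energy argument. So drop the MVT sentence and use the integral bound to locate $s_0$; the rest of your plan goes through unchanged.
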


\begin{proof}[Proof of proposition \ref{proposition:ActionComparison}]
Properties (H1)--(H6) are trivially satisfied  for the discrete action $\mathcal{L}_\tau(x,y)$. Properties (H1)--(H3) and (H5) are also easy to prove for the minimal action $\mathcal{E}_\tau(x,y)$ using the superlinearity of $L(x,v)$.

{\it Part 1: proof of property (H4).} Let $\tau>0$,  $x,y \in \mathbb{R}^d$,  $\|y-x\| \leq \tau R$. Since $\gamma(s) := x + s\frac{y-x}{\tau}$ is a particular path joining $x$ to $y$, we obtain
\[
\sup_{\tau>0, \ \|y-x\| \leq \tau R} {\textstyle\frac{1}{\tau}} \mathcal{E}_\tau(x,y) \leq \sup_{x\in\mathbb{R}^d, \ \|v\| \leq R} L(x,v).
\]
Let $\tau>0$ and $x,y\in\mathbb{R}^d$. By superlinearity, $L(x,v) \geq \|v\| -C$ for some  constant $C>0$. Then $\int_0^\tau\! L(\gamma(s),\dot\gamma(s)) \, ds \geq \|y-x\| -\tau C$ for every absolutely continuous path $\gamma:[0,\tau] \to \mathbb{R}^d$ satisfying $\gamma(0)=x$ and $\gamma(\tau)=y$. One obtains
\[
\inf_{\tau>0, \ x,y\in\mathbb{R}^d}  {\textstyle\frac{1}{\tau}} \mathcal{E}_\tau(x,y) \geq -C. 
\]
{\it Part 2: proof of property (H6).} Let $\tau \in (0,1]$,  $x,y,z \in \mathbb{R}^d$ such that $\|y-x\| \leq \tau R$ and $\|z-x\| \leq \tau R$. By Tonelli-Weierstrass, there exists a $C^2$ minimizer $\gamma : [0,\tau] \to \mathbb{R}^d$ starting at $x$, ending at $y$, and satisfying $\int_0^\tau\! L(\gamma(s),\dot\gamma(s)) \,ds = \mathcal{E}_\tau(x,y)$. Define the path $\xi : [0,\tau] \to \mathbb{R}^d$  by $\xi(s) = \gamma(s) + s \frac{z-y}{\tau}$. By lemma \ref{lemma:AprioriBoundVelocity}, there exists a constant $C(R)>0$ such that $\|\dot\gamma\| \leq C(R)$. Then
\begin{align*}
\mathcal{E}_\tau(x,z) &- \mathcal{E}_\tau(x,y) \leq \int_0^\tau\! \big[ L(\xi(s),\dot\xi(s))  -   L(\gamma(s),\dot\gamma(s)) \big] \, ds \leq \tilde C(R) \|z-y\|,
\end{align*}
where $\tilde C(R) =  \sup_{x\in\mathbb{R}^d, \ \|v\| \leq C(R)+R}\|DL(x,v)\|  $.

{\it Part 3: proof of  item \eqref{item:ComparaisonVerification}.}  Let  $R>0$ and  $C(R)$ be the  constants given by lemma \ref{lemma:AprioriBoundVelocity}. Let $\tau\in (0,1]$ and $\|y-x\| \leq \tau R$. We know that $\mathcal{E}_\tau(x,y)$ admits a $C^2$ minimizer $\gamma : [0,\tau] \to \mathbb{R}^d$ satisfying $\gamma(0)=x$, $\gamma(\tau)=y$, $\mathcal{E}_\tau(x,y) = \int_0^\tau \! L(\gamma,\dot\gamma) \,ds$, $\| \dot\gamma \| \leq C(R)$ and $\|\ddot\gamma \| \leq C(R)$. Let $V_0 = \dot\gamma(0)$. Then
\begin{gather*}
\|\gamma(s) - x\| = \| \gamma(s) - \gamma(0) \| \leq s C(R) \leq  \tau  C(R), \\ 
\| \dot\gamma(s) - V_0 \| \leq sC(R),  \ \ \Big\| \frac{y-x}{\tau} - V_0 \Big\| \leq  \tau C(R) \ \ \text{and}\ \ \Big\| \dot\gamma(s) -\frac{y-x}{\tau} \Big\| \leq 2\tau C(R).
\end{gather*}
We are now in a position to compare the two actions
\[
| \mathcal{E}_\tau(x,y) - \mathcal{L}_\tau(x,y) | \leq \int_0^\tau\! \Big| L (\gamma(s),\dot\gamma(s) ) - L \Big(x, \frac{y-x}{\tau} \Big) \Big| \,ds \leq \tau^2 \tilde C(R),
\]
with $\tilde C(R) := 2 \sup_{x \in \mathbb{R}^d, \ \|v\| \leq R+C(R)}\|DL\| \ C(R)$.
\end{proof}

The a priori estimates of proposition \ref{proposition:AprioriCompactnessLaxOleinik} are the main technical results. 

\begin{proof}[Proof of proposition \ref{proposition:AprioriCompactnessLaxOleinik}]
We begin by fixing the constants $C$ and $R$: let
\begin{align}
C_1 &:= 2 \sup_{\tau\in(0,1], \ \|y-x\|\leq \tau} \frac{E_\tau(x,y) - \bar E_\tau}{\tau}, \notag\\
R &:= \inf \Big\{ R >1 : \inf_{\tau\in(0,1], \ \|y-x\| > \tau R} \ \frac{ E_\tau(x,y) -\bar E_\tau}{\|y-x\| } > C_1 \Big\}, \label{equation:DefinitionR} \\
C &:= \max \Big( C_1, \sup_{\|y-x\|,\ \|z-x\| \leq \tau (R+1)} \frac{E_\tau(x,y) - E_\tau(x,z)}{\|z-y\|} \Big). \notag
\end{align}
Notice that $C_1$ is finite thanks to (H4), $R$ is finite thanks to (H5) and $C$ is finite thanks to (H6). 

{\it Part 1.}We  show a partial proof of item (\ref{item:AprioriLipschitzBound}), namely
\[
\|y-x\| >  \tau \ \Rightarrow \ u_\tau(y) - u_\tau(x) \leq C_1 \|y-x\|.
\]
Indeed, by choosing $n\geq 2$ such that $(n-1)\tau < \|y-x\| \leq n\tau$ and by choosing $x_i = x + \frac{i}{n}(y-x)$, we obtain $n\tau \leq 2\|y-x\|$,
\begin{gather*}
u_\tau(x_{i+1}) - u_\tau(x_i) \leq E_\tau(x_i,x_{i+1}) - \bar E_\tau, \quad\text{and} \\
u_\tau(y) - u_\tau(x) \leq n\tau \sup_{\|y-x\| \leq \tau} \frac{E_\tau(x,y)-\bar E_\tau}{\tau}  \leq C_1 \|y-x\|.
\end{gather*}

{\it Part 2.} We prove item (\ref{item:AprioriJumpBound}). Let $y \in \mathbb{R}^d$. Let $x$ be a calibrated point for $u_\tau$, that is,  $x$ satisfies
\begin{equation*}
u_\tau(y)-u_\tau(x) = E_\tau(x,y)-\bar E_\tau.
\end{equation*}
Choose some $R>1$ as in \eqref{equation:DefinitionR} and assume by contradiction that  $\|y-x\| > \tau R$. Then the first part of the proof may be used and we obtain the absurd inequality
\[
C_1 \|y-x\| \geq u_\tau(y) - u_\tau(x)  > C_1 \|y-x\|.
\]

{\it Part 3.} We end the prove of item (\ref{item:AprioriLipschitzBound}). Let $y,z \in \mathbb{R}^d$, either $\|z-y\| > \tau$ and we are done by the step 1, or $\|z-y\| \leq \tau$. Let $x$ be a calibrated point for $u_\tau$. Then $\|y-x\| \leq \tau R$, $\|z-x\| \leq \tau(R+1)$,
\begin{gather*}
u_\tau(y) - u_\tau(x) = E_\tau(x,y) - \bar E_\tau, \quad
u_\tau(z) - u_\tau(x) \leq E_\tau(x,z) - \bar E_\tau, \\
u_\tau(z) - u_\tau(y) \leq E_\tau(x,z) - E_\tau(x,y) \leq C \|z-y\|.
\end{gather*}
By permuting $z$ and $y$, we just have proved that $\text{\rm Lip}(u_\tau) \leq C$.

{\it Part 4.} We prove item (\ref{item:AprioriLaxOleinikBound}). Let $\kappa >0$. We define $R_\kappa >0$ as before
\[
R_\kappa := \inf \Big\{ R' >1 : \inf_{\tau\in(0,1], \ \|y-x\| > \tau R'} \ \frac{ E_\tau(x,y) -E_\tau(y,y)}{\|y-x\| } > \kappa \Big\}.
\] 
Let $u$ be  a periodic function satisfying $\text{Lip}(u) \leq \kappa$ and $y$ be any point in $\mathbb{R}^d$. Let  $x$ be  a point realizing the minimum of $\min_x \big\{ u(x)+E_\tau(x,y) \big\}$. Assume by contradiction that $\|y-x\| > \tau R_\kappa$, then on the one hand
\[
E_\tau(x,y) - E_\tau(y,y) > \kappa  \|y-x\|,
\]
and on the other hand $u(x)+E_\tau(x,y) \leq u(y) + E_\tau(y,y)$ and
\[
\kappa \|y-x\| \geq u(y)-u(x) \geq E_\tau(x,y) - E_\tau(y,y),
\]
which is impossible. We then estimate $\| \, T_\tau[u] - u \, \|_\infty$. On the one hand
\[
T_\tau[u](y) - u(y) \leq E_\tau(y,y).
\]
On the other hand, if $x$ realizes the minimum of $\min_{x\in\mathbb{R}^d} [u(x)+E_\tau(x,y)]$
\begin{align*}
T_\tau[u](y) - u(y) &= u(x) - u(y)  + E_\tau(x,y) \\
&\geq - \kappa \|y-x\| + \inf_{x,y\in \mathbb{R}^d} E_\tau(x,y), \\
{\textstyle\frac{1}{\tau}} \big[T_\tau[u](y) - u(y) \big] &\geq -\kappa R_\kappa + \inf_{\tau\in(0,1]} \inf_{x,y\in\mathbb{R}^d} {\textstyle\frac{1}{\tau}} E_\tau(x,y).
\end{align*}
We conclude by taking
\[
C_\kappa := \kappa R_\kappa + \sup_{\tau\in(0,1]} \sup_{y\in\mathbb{R}^d} {\textstyle\frac{1}{\tau}} E_\tau(y,y) -  \inf_{\tau\in(0,1]} \inf_{x,y\in\mathbb{R}^d} {\textstyle\frac{1}{\tau}} E_\tau(x,y). \qedhere
\]
\end{proof}

Proposition \ref{theorem:FathiWeakKamConvolutionTheorem} is new for short-range interactions. The proof we present gives another proof of the existence of Fathi's weak KAM solutions in the particular case of the minimal action.

\begin{proof}[Proof of proposition \ref{theorem:FathiWeakKamConvolutionTheorem}]

{\it Part 1.} We  prove property (\ref{item1:FathiWeakKamConvolutionTheorem}) for $\tau \in \mathbb{Q}$. Let be
\[
\bar{E}_\tau(M) := \min \Big\{ \sum_{j=1}^M E_\tau(x_{j-1},x_j) \,:\, x_j \in \mathbb{R}^d \Big\} \qquad \forall ~M\in\mathbb{Z}_+.
\]
It is enough to prove $\bar{E}_{N\tau} = N\bar{E}_\tau$ for every positive integer $N$ and $\tau>0$ not necessarily rational. We choose an integer $M>0$,
\[
(z_0,\ldots,z_M) \in \argmin \Big\{ \sum_{i=1}^M E_{N\tau}(z_{i-1},z_i) \,:\, z_i \in \mathbb{R}^d \Big\},
\]
and by min-plus convolution of $E_{N\tau}$, we choose  $(x_{i,0}, \ldots,x_{i,N})$ so that
\[
E_{N\tau}(z_{i-1},z_i) = \sum_{j=1}^N E_\tau(x_{i,j-1},x_{i,j}), \ \ x_{i,0} = z_{i-1} \ \ \text{and} \  \ x_{i,N} = z_i.
\]
Then $\bar{E}_{N\tau}(M) = \sum_{i=1}^M \sum_{j=1}^N  E_\tau(x_{i,j-1},x_{i,j}) \geq \bar{E}_{\tau}(MN)$. By dividing by $MN$ and by taking $M\to+\infty$, one obtains $\bar{E}_{N\tau} \geq N \bar{E}_\tau$. Conversely, we choose 
\[
(x_0,\ldots,x_{M-1}) \in \argmin \Big\{ \sum_{i=1}^{M-1} E_{\tau}(x_{i-1},x_i) \,:\, x_i \in \mathbb{R}^d \Big\},
\]
and $N$ integer translates $k_j \in \mathbb{Z}^d$, $j=1\ldots N$, such that $k_0=0$ and
\[
\| (x_{0}+k_j) - (x_{M-1} + k_{j-1})\| \leq 1.
\]
We define a new chain $(z_0,\ldots,z_{MN})$ by concatenating the previous translates
\[
z_{i-1+(j-1)M} := x_{i-1} + k_{j-1}M, \quad i=1,\ldots,M, \ \ j=1,\ldots,N.
\]
Then, using the fact $\|z_{jM} - z_{M-1 +(j-1)M}\| \leq 1$
\begin{gather*}
\begin{split}
N \bar{E}_\tau(M-1) &= \sum_{j=1}^{N} \sum_{i=1}^{M-1} E_\tau(z_{i-1 +(j-1)M},z_{i+(j-1)M}) \\
&\geq \sum_{j=1}^{N} \sum_{i=1}^{M} E_\tau(z_{i-1 +(j-1)M},z_{i+(j-1)M}) - N \sup_{\|y-x \| \leq 1} |E_\tau(x,y)|,
\end{split} \\
\begin{split}
\sum_{j=1}^{N} \sum_{i=1}^{M} E_\tau(z_{i-1 +(j-1)M},z_{i+(j-1)M})  &=  \sum_{i=1}^M \sum_{j=1}^N E_\tau(z_{j-1 + (i-1)N},z_{j+(i-1)N}) \\ 
&\geq \sum_{i=1}^M E_{N\tau}(z_{i-1},z_i) \geq \bar{E}_{N \tau}(M).
\end{split}
\end{gather*}
By dividing by $M$ and by taking $M\to+\infty$, one obtains $N\bar{E}_\tau \geq \bar{E}_{N\tau}$.

{\it Part 2.} We  prove an intermediate estimate, namely
\begin{gather*}
\sup_{\tau>0} \| T_\tau[0] - \bar{E}_\tau \| \leq C,
\end{gather*} 
where $C$ is the constant given by the item  (\ref{item:AprioriLipschitzBound}) of proposition \ref{proposition:AprioriCompactnessLaxOleinik}.  Let $\tau>0$ and $N$ be a positive integer such that $\tau/N \leq 1$. Let $u_{\tau/N}$ be a weak KAM solution of $T_{\tau/N}$ that we normalize by $\min u_{\tau/N}=0$. Then
\begin{gather*}
T_{\tau/N}[u_{\tau/N}] = u_{\tau/N}+ \bar{E}_{\tau/N}, \\
T_\tau[u_{\tau/N}] = (T_{\tau/N})^N[u_{\tau/N}] = u_{\tau/N}+N\bar{E}_{\tau/N} = u_{\tau/N} + \bar{E}_\tau.
\end{gather*}
Since   $\|u_{\tau/N}\| \leq C$, we obtain
\begin{gather*}
 T_\tau[0] \leq T_\tau[u_{\tau/N}] \leq C +\bar{E}\tau, \\
T_\tau[0] \geq T_\tau[u_{\tau/N}-C]  = u_{\tau/N} - C +\bar{E}_\tau  \geq -C + \bar{E}_\tau,
\end{gather*}
and finally $\|T_\tau[0] - \bar{E}_\tau\|_\infty \leq C$, for every $\tau>0$.

{\it Part 3.} We resume the proof of property (\ref{item1:FathiWeakKamConvolutionTheorem}) for $\tau\not\in \mathbb{Q}$. We choose  $p_i,q_i \in\mathbb{N}$, $q_i \to + \infty$, such that $p_i < q_i \tau < p_i+1$. Denote by $\sigma_i = p_i+1-q_i\tau$. Then $T_{p_i+1} = T_{\sigma_i} \circ T_{q_i\tau}$. Since $\|T_{q_i\tau}[0] - q_i\bar{E}_\tau\|_\infty \leq C$, by applying $T_{\sigma_i}$, one obtains on the one hand 
\[
\|T_{p_i+1}[0] -q_i\bar{E}_\tau \|_\infty \leq C +\|\,T_{\sigma_i}[0]\,\|_\infty.
\]
On the other hand $\|T_{p_i+1}[0]-(p_i+1)\bar{E}_1\|_\infty \leq C$, which implies
\[
\|(p_i+1)\bar{E}_1 - q_i\bar{E}_\tau\|_\infty \leq 2C + \sup_{\sigma\in(0,1]} \|\,T_{\sigma}[0]\,\|_\infty.
\]
Notice that item (\ref{item:AprioriLaxOleinikBound}) of proposition \ref{proposition:AprioriCompactnessLaxOleinik} implies that $\|\,T_{\sigma}[0]\,\|_\infty$ is uniformly bounded for $\sigma\in(0,1]$. We conclude by dividing by $q_i$ and letting $q_i$ go to infinity.

{\it Part 4.} We  prove item (\ref{item2:FathiWeakKamConvolutionTheorem}). From the a priori compactness property of proposition \ref{proposition:AprioriCompactnessLaxOleinik}, one can find a constant $C >0$ such that every discrete weak KAM solutions $u_{\tau}$ satisfies $\text{Lip}(u_{\tau}) \leq C$. Since $u_{\tau}$ is defined up to a constant, we may assume that $\min(u_{\tau})=0$.  By choosing a subsequence $\tau_i \to 0$, we may assume that $u_{\tau_i} \to u$ uniformly. Moreover the second part of proposition \ref{proposition:AprioriCompactnessLaxOleinik} implies that  $\|\, T_{\sigma}[v]-v\,\|_\infty \leq \sigma C$, for every $\sigma\in(0,1]$ and every Lipshitz function $v$ satisfying $\text{Lip}(v) \leq C$. Let $t>0$. There exist integers $N_i$ such that $N_i \tau_i \leq t < (N_i+1) \tau_i$. Let $\sigma_i = t-N_i\tau_i$. Then
\begin{gather*}
T_{\tau_i}[u_{\tau_i}] = u_{\tau_i} + \tau_i \bar{E}_1, \quad T_{N_i\tau_i}[u_{\tau_i}] = u_{\tau_i} +N_i\tau_i \bar{E}_1, \\
 T_{t}[u_{\tau_i}] = T_{t-N_i\tau_i}[u_{\tau_i}] +N_i\tau_i \bar{E}_1, \\ 
\| \, T_t[u_{\tau_i}] -u_{\tau_i} -t\bar{E}_1 \, \|_\infty \leq \| \, T_{\sigma_i}[u_{\tau_i}] -u_{\tau_i} \, \|_\infty + \sigma_i |\bar{E}_1|.
\end{gather*}
As $\sigma_i \to 0$, $u_{\tau_i} \to u$, $T_{\sigma_i}[u] \to u$, and $\|T_{\sigma_i}[u_{\tau_i}] - T_{\sigma_i}[u]\|_\infty \leq \|u_{\tau_i}-u\|_\infty$, we obtain $T_t[u]=u+t\bar{E}_1$.

{\it Part 5.} We prove item (\ref{item3:FathiWeakKamConvolutionTheorem}). We first notice  
\[
\min_{x,y \in \mathbb{R}^d} E_t(x,y) = \min_{y \in \mathbb{R}^d}T_t[0](y).
\]
On the one hand,
\[
T_t[0] \leq  T_t[u-\min(u)] = u+ t \bar E_1 -\min(u) \leq \max(u)-\min(u)+ t \bar E_1.
\]
On the other hand, 
\[
T_t[0] \geq T_t[u-\max(u)] = u + t \bar E_1 -\max(u) \geq \min(u)-\max(u) + t \bar E_1.
\]
In particular $\|T_t[0]- t \bar E_1\|_\infty \leq \text{\rm osc}(u)$ and $\lim_{t\to+\infty} \min_{x,y\in\mathbb{R}^d} \frac{1}{t} E_t(x,y)=\bar E_1$.
\end{proof}

We conclude this section by the proof of theorem \ref{theorem:Main}.

\begin{proof}[Proof  of theorem \ref{theorem:Main}]
{\it Part 1: proof of items \eqref{item1:MainTheorem}--\eqref{item2:MainTheorem}.}
The discrete action $\mathcal{L}_\tau(x,y)$ and the minimal action $\mathcal{E}_\tau(x,y)$ are particular cases of  short-range interactions. Item \eqref{item1:MainTheorem} is proved in proposition \ref{theorem:AdditiveEigenvalueProblem}. Item \eqref{item2:MainTheorem} is proved in proposition  \ref{theorem:FathiWeakKamConvolutionTheorem}.

{\it Part 2: Proof of item \eqref{item3:MainTheorem}.} Let us show there exists a constant $C>0$ such that
\[
| \bar{\mathcal{E}}_\tau - \bar{\mathcal{L}}_\tau | \leq \tau^2 C, \quad \forall \tau \in (0,1].
\]
Let $u_\tau$ be a discrete weak KAM solution of $\mathcal{E}_\tau(x,y)$  and $(x_{-k})_{k=0}^{+\infty}$ be a  calibrated configuration for $u_\tau$. Thanks to propositions \ref{proposition:AprioriCompactnessLaxOleinik} and \ref{proposition:ActionComparison}, there exist constants $R>0$ and $C>0$ independent of $\tau$ such that,
\begin{gather*}
\|x_{-k}-x_{-k-1}\| \leq \tau R,\quad \forall k\geq0, \\
| \mathcal{E}_\tau(x,y) - \mathcal{L}_\tau(x,y) | \leq \tau^2 C,\quad \text{$\forall x,y$ satisfying $\|y-x\| \leq \tau R$}, \\
\mathcal{E}_\tau(x_{-k-1},x_{-k}) = u_\tau(x_{-k}) - u_\tau(x_{-k-1}) + \bar{\mathcal{E}}_\tau, \\
\mathcal{L}_\tau(x_{-k-1}, x_{-k}) \leq \mathcal{E}_\tau(x_{-k-1},x_{-k})  + \tau^2 C, \\
\frac{1}{n} \sum_{k=0}^{n-1} \mathcal{L}_\tau(x_{-k-1},x_{-k}) \leq \bar{\mathcal{E}}_\tau + \tau^2 C(R) + \frac{2}{n} \|u_\tau\|_{\infty}.
\end{gather*}
By taking the limit $n\to+\infty$, and by using the mean action per site formula, we obtain $\bar{\mathcal{L}}_\tau \leq \bar{\mathcal{E}}_\tau + \tau^2 C$. By permuting the roles of $\mathcal{E}_\tau$ and $\mathcal{L}_\tau$ we conclude the proof of item \eqref{item3:MainTheorem}.

{\it Part 3: Proof of item \eqref{item4:MainTheorem}.} It  follows directly from the a priori compactness property of proposition \ref{proposition:AprioriCompactnessLaxOleinik}.

{\it Part 4: Proof of item \eqref{item5:MainTheorem}.}
We will use two Lax-Oleinik operators: $T_\tau$, the discrete Lax-Oleinik operator associated to $\mathcal{L}_\tau$, and $T^\tau$, the Lax-Oleinik semi-group associated to $\mathcal{E}_\tau$. We claim there exists  a constant $C>0$  such that, for every small $\tau > 0$, for every discrete weak KAM solution $u$ for $\mathcal{L}_\tau$, 
\[
\| \, T^\tau[u] - T_\tau[u] \, \|_\infty \leq \tau^2 C.
\]
Indeed, we know from propositions \ref{proposition:AprioriCompactnessLaxOleinik} and \ref{proposition:ActionComparison},  there exist  positive constants $R$  and $C$ such that, for every $\tau \in (0,1]$, for every discrete weak KAM solution $u$ for $\mathcal{L}_\tau$,

-- $\text{\rm Lip}(u) \leq C$, $\| u \|_\infty \leq C$,

-- $\forall y \in \mathbb{R}^d$,\quad $x \in \argmin_{x\in\mathbb{R}^d} \big\{ u(x) + \mathcal{L}_\tau(x,y) \big\} \Rightarrow \|y-x \| \leq \tau R$, 

-- $\forall y \in \mathbb{R}^d$,\quad $x \in \argmin_{x\in\mathbb{R}^d} \big\{ u(x) + \mathcal{E}_\tau(x,y) \big\} \Rightarrow \|y-x \| \leq \tau R$, 

-- $\| \, T^\tau[u]-u \, \|_\infty \leq \tau C$,

-- for every $x,y$,\quad  $\|y-x\| \leq \tau R \ \Rightarrow \| \mathcal{E}_\tau(x,y) - \mathcal{L}_\tau(x,y) | \leq \tau^2 C$.

\noindent On the one hand, for every $y$ and $x \in \argmin_{x\in\mathbb{R}^d} \big\{ u(x) + \mathcal{L}_\tau(x,y) \big\}$,
\begin{gather*}
T^\tau[u](y) \leq u(x) + \mathcal{E}_\tau(x,y) \leq u(x) + \mathcal{L}_\tau(x,y) + \tau^2 C, \\
T^\tau[u](y) \leq T_\tau[u](y) + \tau^2 C.
\end{gather*}
On the other hand, if $x \in \argmin_{x\in\mathbb{R}^d} \big[ u(x) + \mathcal{E}_\tau(x,y) \big]$,
\begin{gather*}
T^\tau[u](y) = u(x) + \mathcal{E}_\tau(x,y) \geq u(x) + \mathcal{L}_\tau(x,y) -\tau^2 C, \\
T^\tau[u](y) \geq T_\tau[u](y) - \tau^2 C.
\end{gather*}
The claim is proved. Since $\text{\rm Lip}(u)$ is uniformly bounded independently of $\tau$ for any discrete weak KAM solution $u$ for $\mathcal{L}_\tau$, we may choose  a sequence of times $\tau_i \to 0$  and  discrete weak KAM solutions $u_i$ for $\mathcal{L}_{\tau_i}$  such that $u_{i} \to u$ uniformly for some periodic Lipschitz function $u$. Let $t>0$ be fixed, and  $N_i$ be integers such that $N_i\tau_i \leq t < (N_i+1)\tau$. The non-expansiveness property of the Lax-Oleinik operator implies
\[
\| \, T^{t}[u] - T^{N_i\tau_i}[u_{i}] \, \|_\infty \leq \| \, T^{t-N_i\tau_i}[u] - u \, \|_\infty + \| u-u_i\|_\infty \to 0.
\]
The previous claim $\| \, T^{\tau_i}[u_i] - T_{\tau_i}[u_i] \, \|_\infty \leq \tau_i^2 C$ and the estimate $| \bar{\mathcal{E}}_{\tau_i} - \bar{\mathcal{L}}_{\tau_i} | \leq \tau_i^2 C$, proved in item (\ref{item3:MainTheorem}) of theorem \ref{theorem:Main},  imply 
\begin{gather*}
\| \, T^{\tau_i}[u_i] - u_i - \tau_i\bar{\mathcal{E}}_1 \, \|_\infty \leq \tau_i^2 2C.
\end{gather*}
By iterating this inequality, one obtains
\[
\| \, T^{N_i\tau_i}[u_i] - u_i - N_i\tau_i \bar{\mathcal{E}}_1 \, \|_\infty \leq N_i\tau_i^2 2C \leq t \tau_i 2C.
\]
Since $u_i + N_i\tau_i \bar{\mathcal{E}}_1 \to u + t \bar{\mathcal{E}}_1 $, one get 
\[
T^t[u] = u + t \bar{\mathcal{E}}_1, \quad \forall t>0. \qedhere
\]
\end{proof}

\begin{section}{Second approximation scheme}
\label{section:ProofSecondTheorem}

This section is devoted to the proof of  theorem \ref{theorem:SelectionPrinciple}.  Our approach follows the article \cite{DaviniFathiIturriagaZavidovique2014} to identify the selected discrete weak KAM solution but with a slightly more precise description using  Aubry classes and extremal plans.

We first  improve the a priori estimates  of proposition \ref{proposition:AprioriCompactnessLaxOleinik} to the discounted case.

\begin{proof}[Proof of proposition \ref{proposition:AprioriBoundDiscountedLaxOleinik}]
{\it Part 1.} The operator $T_{\tau,\delta}$ is contracting in $C^0$ norm, i.e.
\[
\| \, T_{\tau,\delta}[u] - T_{\tau,\delta}[v] \, \|_\infty \leq  (1-\tau\delta) \| \, u-v \, \|_\infty, \quad \forall~ u,v \in C^0(\mathbb{T}^d).
\]
Moreover, $T_{\tau,\delta}$ preserves the ball $\|u\|_\infty \leq \frac{C_0}{\delta}$ where
\[
C_0 := \sup_{\tau\in(0,1]} \Big( \sup_{x\in\mathbb{R}^d} \frac{E_\tau(x,x)}{\tau}, -\inf_{x,y\in\mathbb{R}^d} \frac{E_\tau(x,y)}{\tau} \Big).
\]
Indeed, we have
\begin{gather*}
T_{\tau,\delta}[u](y) \leq (1-\tau\delta) \max(u) +\max_{x\in\mathbb{R}^d}E_\tau(x,x), \\
T_{\tau,\delta}[u](y) \geq (1-\tau\delta)\min(u)+\min_{x,y\in\mathbb{R}^d}E_\tau(x,y), \\
\|u\|_\infty \leq \frac{C_0}{\delta} \Rightarrow \| \, T_{\tau,\delta}[u] \, \|_\infty \leq (1-\tau\delta)\|u\|_\infty +\tau C_0 \leq \frac{C_0}{\delta}.
\end{gather*}
In particular $T_{\tau,\delta}$ admits a unique fixed point $u_{\tau,\delta}$ which is inside $B(0,\frac{C_0}{\delta})$. We have proved item \eqref{item:AprioriBoundDiscountedLaxOleinik1}. The fixed point satisfies
\[
u_{\tau,\delta}(y) = \min_{x\in\mathbb{R}^d} \big\{ (1-\tau\delta)u_{\tau,\delta}(x) + E_\tau(x,y) \big\}, \quad\forall y\in\mathbb{R}^d.
\]
By iterating backward, one obtains the explicit formula for $u_{\tau,\delta}$. 

{\it Part 2.} We prove item \eqref{item:AprioriBoundDiscountedLaxOleinik3}.  We use the same reasoning as in the proof of proposition \ref{proposition:AprioriCompactnessLaxOleinik}. We claim that for every point $x,y$ satisfying $\|y-x\| \geq \tau$, we have
\begin{gather*}
|u_{\tau,\delta}(y) - u_{\tau,\delta}(x)| \leq C_1 \|y-x\|, \quad\text{with} \quad 
C_1 := \sup_{\tau\in(0,1]} \sup_{\|y-x\| \leq 2\tau} \Big( \frac{E_\tau(x,y)}{\tau}+C_0 \Big).
\end{gather*}
Indeed,  choose $n\geq 1$ so that $n\tau < \|y-x\| \leq (n+1)\tau$ and define $x_i = x+\frac{i}{n}(y-x)$. By applying  $n$ times the inequality
\[
u_{\tau,\delta}(x_{i+1}) - u_{\tau,\delta}(x_i) \leq E_\tau(x_i,x_{i+1}) + \tau\delta\|u_{\tau,\delta}\|_\infty \leq \tau C_1
\]
we obtain $u_{\tau,\delta}(y) - u_{\tau,\delta}(x) \leq C_1 \|y-x\|$. 

Define $R$ using the uniform super-linearity (H5) by
\begin{equation*}
R := \inf \Big\{ R>1 \,:\, \inf_{\tau\in(0,1]} \, \inf_{\|y-x\| \geq  \tau R} \frac{E_\tau(x,y) -C_0\tau}{\|y-x\|} > C_1
 \Big\}.\end{equation*}
We prove by contradiction that every $x\in \argmin_{x} \{ (1-\tau\delta)u_{\tau,\delta}(x) + E_\tau(x,y) \big\}$ satisfies $\|y-x\| \leq \tau R$.  If not $\|y-x\|>\tau R >\tau$,  $u_{\tau,\delta}(y) - u_{\tau,\delta}(x) \leq C_1\|y-x\|$ and by definition of $R$, we have
\[
u_{\tau,\delta}(y) - u_{\tau,\delta}(x) \geq E_\tau(x,y) - \tau\delta \|u_{\tau,\delta}\|_\infty \geq E_\tau(x,y) -\tau C_0 > C_1 \|y-x\|.
\]
We obtain a contradiction, therefore $\|y-x\| \leq \tau R$ and the proof of item \eqref{item:AprioriBoundDiscountedLaxOleinik3} is complete. 

{\it Part 3.} We prove item \eqref{item:AprioriBoundDiscountedLaxOleinik4}.  If $\|z-y\| \leq \tau$ and $x$ is a  point realizing the minimum in the definition of $u_{\tau,\delta}(y)$, 
\begin{equation*}
u_{\tau,\delta}(z) - u_{\tau,\delta}(y) \leq E_\tau(x,z) - E_\tau(x,y) \leq C\|z-y\|,
\end{equation*}
where
\[
C := \max\Big( C_1, \sup_{\tau\in(0,1]} \, \sup_{\|y-x\|,\|z-x\| \leq \tau(R+1)} \frac{E_\tau(x,z)-E_\tau(x,y)}{\|y-x\|} \Big). \qedhere
\]
\end{proof}

\begin{proof}[Proof of lemma \ref{lemma:AubryClassWeakKAM}] Let $\pi$ be an extremal plan, and $\mu=pr^1_*(\pi)$.

{\it Part 1.} Let be $\hat\Omega := (\mathbb{T}^d)^{\mathbb{N}}$, $\hat \sigma :\hat\Omega \to \hat\Omega$ be the left shift, and $pr^{1,2} : \hat\Omega \to \mathbb{T}^d \times \mathbb{T}^d$ be the projection onto the first two coordinates. We claim there exists an ergodic $\hat\sigma$-invariant probability measure $\hat\pi$ defined  on $\hat\Omega$ which projects onto $\pi$ by $pr^{1,2}$  and minimizes $\hat E_\tau(x) := E_\tau^*(x_0,x_1), \ \forall x = (x_0,x_1, \ldots) \in \hat\Omega$.

Let $\pi(dx,dy) = \mu(dx)\mathbb{P}(dy|x)$ be a regular family of disintegrated measures of $\pi$ along the projection $pr^1$. Define the Markov measure on $\hat\Omega$ by
\[
\hat{\mathbb{P}}(dx) =\mu(dx_0) \mathbb{P}(dx_1|x_0) \mathbb{P}(dx_2|x_1) \cdots
\]
Then $\hat{\mathbb{P}}$ is a $\hat\sigma$-invariant probability measure which projects onto $\pi$ and minimizes $\hat E_\tau$. Let $\hat{\mathbb{P}}(dx) = \int_{\hat \Omega}\! \hat{\mathbb{P}}_\omega(dx) \hat{\mathbb{P}}(d\omega)$ be an ergodic decomposition of $\hat{\mathbb{P}}$ (see \cite[Theorem 6.1]{Manebook}). We claim that $\omega \mapsto pr^{1,2}_*(\hat{\mathbb{P}}_\omega)$ is a.e. constant. By contradiction there would exist $\varphi \in C^0(\mathbb{T}^d \times \mathbb{T}^d)$ and a constant $a \in \mathbb{R}$ such that
\begin{equation*}
\hat B := \Big\{ \omega \in \hat\Omega :  \int \varphi(x,y) pr^{1,2}_*(\mathbb{P}_\omega)(dx,dy) < a \Big\}.
\end{equation*}
Both $\hat B$ and $\hat B^c$ have positive measure. Since $\hat{\mathbb{P}}_\omega$ is $\hat\sigma$-invariant and minimizing, $pr^{1,2}_*(\hat{\mathbb{P}}_\omega)$ is a minimizing plan. Define
\begin{align*}
\pi_1(dx,dy) := \frac{1}{\hat{\mathbb{P}}(\hat B)}\int_{\hat B} pr^{1,2}_*(\hat{\mathbb{P}}_\omega)(dx,dy) \, \hat{\mathbb{P}}(d\omega),  \\ 
\pi_2(dx,dy) := \frac{1}{\hat{\mathbb{P}}(\hat B^c)}\int_{\hat B_c} pr^{1,2}_*(\hat{\mathbb{P}}_\omega)(dx,dy) \, \hat{\mathbb{P}}(d\omega).
\end{align*}
Then $\pi_1$ and $\pi_2$ are distinct minimizing plans and 
\[
\pi = \hat{\mathbb{P}}(\hat B) \pi_1+ \hat{\mathbb{P}}(\hat B^c) \pi_2, \ \text{with} \  \hat{\mathbb{P}}(\hat B)  \in (0,1) \text{ non-trivial},
\]
which contradicts the fact that $\pi$ is extremal. We have obtained for almost every $\omega$, $pr^{1,2}(\hat{\mathbb{P}}_\omega) = \pi$  and $\hat{\mathbb{P}}_\omega$ is ergodic.

{\it Part 2: proof of item \eqref{item1:AubryClassWeakKAM}.} We have shown from part 1 there exists an ergodic  $\hat\sigma$-invariant measure $\hat\pi$ on $\hat\Omega$ such that $pr^{1}_*(\hat \pi) = \mu$, where $pr^1 : \hat\Omega \to \mathbb{T}^d$ is the first projection. Let  $\epsilon>0, x,y\in\text{supp}(\mu)$.  Define
\[
\hat{B}_x =\{  (x_0, x_1, \cdots) : x_0\in B(x, \epsilon)\},\quad \hat{B}_y =\{  (x_0, x_1, \cdots) : x_0\in B(y, \epsilon)\}.
\]
Then $\hat{B}_x, \hat{B}_y$ are open sets   and have positive measures for $\hat\pi$. Choose a discrete weak KAM solution $u_\tau$ and define
\[
\hat\varphi(z) := E^*_\tau(z_0,z_1) -[u_\tau(z_1) - u_\tau(z_0) ] -\bar{E}_\tau, \quad \forall z=(z_0,z_1, \ldots) \in \hat\Omega.
\]
By Atkinson's theorem \cite{Atkinson76}, since $\int\! \hat\varphi \, d\hat\pi = 0$,  for a.e. $z\in \hat{B}_x$, 
\[
\exists 0<m<n, \ \  \text{s.t.} \ \  \hat\sigma^m(z) \in \hat{B}_y,\ \ \hat\sigma^n(z) \in \hat{B}_x,\ \ \text{and} \ \  0 \leq \sum_{k=0}^{n-1}\hat{\varphi}\circ \hat{\sigma}^k(z) <\epsilon.
\]
We have obtained in particular,  $z_0\in B(x, \epsilon)$, $z_m\in B(y, \epsilon)$, $z_n\in B(x, \epsilon)$, and 
\[
\Phi_\tau^*(z_0, z_m) + \Phi_\tau^*(z_m, z_n) \leq \sum_{k=0}^{n-1} \hat{\varphi} \circ \hat{\sigma}^k(\omega) + [u_\tau(z_n)-u_\tau(z_0)] = O(\epsilon).
\] 
Letting $\epsilon \rightarrow 0$, we obtain $\Phi_\tau^*(x,y) + \Phi_\tau^*(y, x) = 0$ or  $x\sim y$.

{\it Part 3: proof of item \eqref{item2:AubryClassWeakKAM}.} Let $A$ be the Aubry class containing $\text{\rm supp}(\mu)$ and $\bar z \in A$ arbitrarily fixed. Then, as a function of $y$,
\[
\int\! \Phi^*_\tau(z,y) \, \mu(dz) = \int\! \Phi^*_\tau(z,\bar z) \, \mu(dz) + \Phi^*_\tau(\bar z,y), \forall y \in \mathbb{R}^d
\]
is a discrete weak KAM solution thanks to item \eqref{item5:ResultsManePotential} of lemma \ref{lemma:ResultsManePotential}.

{\it Part 4: proof of item \eqref{item3:AubryClassWeakKAM}.} For every $x,y \in A$, $\Phi^*_\tau(x,y)+\Phi_\tau^*(y,x)=0$. We conclude by integrating with respect to $\mu(dx)\mu(dy)$.
\end{proof}

\begin{proof}[Proof of proposition \ref{proposition:BalancedWeakKAMsolution}]
{\it Part 1.} We use the notations of part 1 in the proof of lemma \ref{lemma:AubryClassWeakKAM}. We claim that the infimum in the definition of $u_\tau^*$ can be realized at an extremal plan. Let $\pi$ be a minimizing plan realizing the infimum. Let $\hat{\mathbb{P}}$ be a $\hat\sigma$-invariant measure on $\hat\Omega$ such that $pr^{1,2}_*(\hat{\mathbb{P}})=\pi$. Then $\hat{\mathbb{P}}$ is minimizing. Let $\hat{\mathbb{P}}(dx) = \int\! \hat{\mathbb{P}}_\omega(dx) \, \hat{\mathbb{P}}(d\omega)$ be an ergodic decomposition. Define $\pi_\omega := pr^{1,2}_*(\hat{\mathbb{P}}_\omega)$. Since $\hat{\mathbb{P}}_\omega$ is ergodic, $\pi_\omega$ is an extremal plan. Moreover, for $x$ fixed,
\begin{align*}
\pi(dx,dy) &= \int_{\hat\Omega} \pi_\omega(dx,dy)\, \hat{\mathbb{P}}(d\omega), \\
u_\tau^*(x) &= \int_{\hat\Omega} \Big[ \int_{\mathbb{T}^d} \Phi^*_\tau(z,x) \, pr^1_*(\pi_\omega)(dz) \Big] \, \hat{\mathbb{P}}(d\omega), \\
u_\tau^*(x) &= \int_{\mathbb{T}^d} \Phi^*_\tau(z,x) \, pr^1_*(\pi_\omega)(dz), \quad  \hat{\mathbb{P}}(d\omega) \ \text{a.e.}, \\
u_\tau^*(x) &=  \inf \Big\{ \int_{\mathbb{T}^d}\! \Phi^*_\tau(z,x) \, pr_*^1(\pi)(dz) : \pi \in \mathcal{M}^*(E_\tau) \ \text{and is extremal} \Big\}.
\end{align*}

{\it Part 2: proof of items \eqref{item1:BalancedWeakKAMsolution}.} It follows from the fact that  $u_\tau^*$ is obtained as an infimum of discrete weak KAM solutions thanks to part 1 and item \eqref{item2:AubryClassWeakKAM} of lemma \ref{lemma:AubryClassWeakKAM}. 

{\it Part 3: proof of item \eqref{item2:BalancedWeakKAMsolution},\eqref{item3:BalancedWeakKAMsolution}.} They follow from item \eqref{item3:AubryClassWeakKAM} of lemma \ref{lemma:AubryClassWeakKAM}.
\end{proof}

\begin{proof}[Proof of proposition \ref{proposition:DiscountedSelectionPrinciple}]
{\it Part 1.} Let $C$ be the constant given by proposition \ref{proposition:AprioriCompactnessLaxOleinik}.  We claim that for every $\tau,\delta \in (0,1]$,
\[
\Big\| u_{\tau,\delta} - \frac{\bar E_\tau}{\tau\delta} \Big\|_\infty \leq C.
\]
Let $u_\tau$ be some discrete weak KAM solution. Let be
\[
y \in \argmax_{y\in\mathbb{R}^d} \Big\{ u_{\tau,\delta}(y) - \frac{\bar E_\tau}{\tau\delta} - u_\tau(y) \Big\}.
\]
As a fixed point of $T_{\tau,\delta}$, the discounted discrete solution satisfies for every $x$,
\begin{align*}
u_{\tau,\delta}(y) - \frac{\bar E_\tau}{\tau\delta} - u_\tau(y) &\leq (1-\tau\delta) \Big[ u_{\tau,\delta}(x) - \frac{\bar E_\tau}{\tau\delta} - u_\tau(x) \Big] \\
&\quad+ \big[ E_\tau(x,y) -u_\tau(y) + u_\tau(x) - \bar E_\tau \big] - \tau\delta u_\tau(x).
\end{align*}
Let $x$ be  a backward calibrated point for $y$ with respect to $u_\tau$ Then, 
by  definition of $y$, we have
\begin{gather*}
u_{\tau,\delta}(x) - \frac{\bar E_\tau}{\tau\delta} - u_\tau(x) \leq u_{\tau,\delta}(y) - \frac{\bar E_\tau}{\tau\delta} - u_\tau(y), \\
u_{\tau,\delta}(y) - \frac{\bar E_\tau}{\tau\delta} - u_\tau(y) \leq - u_\tau(x), \\
u_{\tau,\delta}(y) - \frac{\bar E_\tau}{\tau\delta} \leq \text{\rm osc}(u_\tau) \leq C.
\end{gather*}
On the other hand, let $y$ be a point realizing the minimum of $ u_{\tau,\delta}(y) - \frac{\bar E_\tau}{\tau\delta} - u_\tau(y)$ and  $x$ be a discounted backward calibrated  point for $y$, that is satisfying
\[
 u_{\tau,\delta}(y)  = (1-\tau\delta)u_{\tau,\delta}(x) +E_\tau(x,y).
\]
Then similar to what we have done in part 1, we obtain
\begin{align*}
u_{\tau,\delta}(y) - \frac{\bar E_\tau}{\tau\delta} - u_\tau(y) &= (1-\tau\delta) \Big[ u_{\tau,\delta}(x) - \frac{\bar E_\tau}{\tau\delta} - u_\tau(x) \Big] \\
&\quad+ \big[ E_\tau(x,y) -u_\tau(y) + u_\tau(x) - \bar E_\tau \big] - \tau\delta u_\tau(x).
\end{align*}
As $ E_\tau(x,y) -u_\tau(y) + u_\tau(x) - \bar E_\tau \geq 0$, we obtain $u_{\tau,\delta}(y) - \frac{\bar E_\tau}{\tau\delta} - u_\tau(y) \geq -u_\tau(x)$ or $u_{\tau,\delta}(y) - \frac{\bar E_\tau}{\tau\delta} \geq -\text{\rm osc}(u_\tau) \geq -C$.

{\it Part 2.}
We claim that for every $\tau,\delta \in (0,1]$,  $\pi \in \mathcal{M}^*(E_\tau)$, $\mu = pr^1_*(\pi)$,
\[
\int_{\mathbb{T}^d}\! \Big[ u_{\tau,\delta}(x) -  \frac{\bar E_\tau}{\tau\delta} \Big] \, d\mu(x) \leq 0.
\]
By definition of the discounted discrete solution $u_{\tau,\delta}$, we have
\begin{equation*}
u_{\tau,\delta} (y) \leq (1-\tau\delta)u_{\tau, \delta}(x)  + E^*_\tau(x,y), \quad \forall x,y\in\mathbb{R}^d.
\end{equation*}
By integrating the previous inequality, we obtain
\begin{multline*}
\int_{ \mathbb{T}^d}\! u_{\tau,\delta}(y) \, \mu (dy)
\leq (1-\tau\delta) \int_{\mathbb{T}^d}\! u_{\tau,\delta}(x) \, \mu(dx) + \iint_{\mathbb{T}^d \times \mathbb{T}^d}\! E^*_\tau(x,y) \, \pi(dx,dy).
\end{multline*}
The last integral is equal to $\bar E_\tau$ and $\tau\delta \int_{\mathbb{T}^d}\! u_{\tau,\delta}(x) \,\mu(dx) \leq \bar E_\tau$.

{\it Part 3.} 
Let $\tau>0$ be fixed. Let $\delta_i \to 0$ be a sequence converging to 0. For every $\delta_i$, let  $(x_{-k}^i)_{k=0}^{+\infty}$ be a discounted backward calibrated configuration,
\[
u_{\tau,\delta_i}(x_{-k}^i) = (1-\tau\delta_i) u_{\tau,\delta_i}(x_{-k-1}^i) + E_\tau(x_{-k-1}^i,x_{-k}^i).
\]
 Let $\pi_i$ be the  probability measure on $\mathbb{T}^d \times \mathbb{T}^d$ defined by
\[
\pi_i := \sum_{k\geq0}  \tau\delta (1-\tau\delta)^k \delta_{(x_{-k-1}^i, x_{-k}^i )}.
\]
We claim that every weak${}^*$ accumulation measure $\pi$ of $\{\pi_i\}_{i=1}^{\infty}$ is a minimizing plan. Assume that $\pi_i \to \pi$ as $i \to \infty$ to simplify the notations. 

We first prove that $\pi$ is a stationary plan. Let $\varphi : \mathbb{T}^d \to \mathbb{R}$ be a continuous function, then
\begin{align*}
\iint_{\mathbb{T}^d\times\mathbb{T}^d} \varphi(y) \, \pi_i(dx,dy) &= \sum_{k\geq0} \tau\delta_i(1-\tau\delta_i)^k \varphi(x_{-k}^i) \\
&=\tau\delta_i \varphi(x_0^i) +  (1-\tau\delta_i)\sum_{k\geq0} \tau\delta_i(1-\tau\delta_i)^k \varphi(x_{-k-1}^i) \\
&= \tau\delta_i \varphi(x_0^i) + (1-\tau\delta_i) \iint_{\mathbb{T}^d\times\mathbb{T}^d} \varphi(x) \, \pi_i(dx,dy).
\end{align*}
We complete the proof by letting $\delta_i \to 0$. We next prove that $\pi$ is minimizing:
\begin{align*}
\iint_{\mathbb{T}^d\times\mathbb{T}^d}\! E^*_\tau(x, &y) \, \pi_i(dx,dy) = \sum_{k\geq 0}^{} \tau\delta_i(1-\tau\delta_i)^k E^*_\tau(x_{-k-1}^i,x_{-k}^i) \\
&= \sum_{k\geq 0}^{} \tau\delta_i(1-\tau\delta_i)^k \big[ u_{\tau,\delta_i}(x_{-k}^i) - (1-\tau\delta_i) u_{\tau,\delta_i}(x_{-k-1}^i) \big] = \tau\delta_i u_{\tau,\delta_i} (x_0^i).
\end{align*}
We conclude the proof thanks to part 1 which implies $\tau\delta_iu_{\tau\delta_i} \to \bar E_\tau$ uniformly.

{\it Part 4.} Since $\text{\rm Lip}(u_{\tau,\delta})$ and $\big\| u_{\tau,\delta} - \frac{\bar E_\tau}{\tau\delta} \big\|_\infty$ are uniformly bounded with respect to $\delta$, there exists a sub-sequence $\delta_i \to 0$ and a $C^0$ periodic function $u_\tau$ such that, 
\[
u_{\tau,\delta_i} - \frac{\bar E_\tau}{\tau\delta_i} \to u_\tau, \quad \text{ in the $C^0$-topology}.
\]

We first prove  that $u_\tau$ is a discrete weak KAM solution. On the one hand, by letting $\delta_i \to 0$ in
\[
u_{\tau,\delta_i}(y) - \frac{\bar E_\tau}{\tau\delta_i} \leq (1-\tau\delta_i) \big[ u_{\tau,\delta_i}(x) - \frac{\bar E_\tau}{\tau\delta_i} \big] + E_\tau(x,y) - \bar E_\tau,
\]
one obtains $u_\tau(y) - u_\tau(x) \leq E_\tau(x,y) - \bar E_\tau$, for every $x,y \in \mathbb{R}^d$. On the other hand, for every $y$, there exists $x_i \in \mathbb{R}^d$ such that
\[
u_{\tau,\delta_i}(y) - \frac{\bar E_\tau}{\tau\delta_i} = (1-\tau\delta_i) \Big[ u_{\tau,\delta_i}(x_i) - \frac{\bar E_\tau}{\tau\delta_i} \Big] + E_\tau(x_i,y) - \bar E_\tau.
\]
Proposition \ref{proposition:AprioriBoundDiscountedLaxOleinik} implies there exists  a constant $R>0$, independent of $\delta$, such that  $\|y-x_i\| \leq \tau R$. By taking possibly a sub-sequence, one may assume $x_i \to x$ for some $x\in\mathbb{R}^d$. One then obtains $u_\tau(y) - u_\tau(x) = E_\tau(x,y) - \bar E_\tau$. The proof is finished.

We next prove that $u_\tau = u_\tau^*$ given by proposition \ref{proposition:BalancedWeakKAMsolution}.  Let $\pi \in\mathcal{M}^*(E_\tau)$ and $\mu = pr^1_*(\pi)$.  By letting $\delta_i \to 0$ in part 2, one obtains  $\int_{\mathbb{T}^d}\! u_\tau(x)\, d\mu(x) \leq  0$ and
\[
u_\tau(y) \leq \sup \Big\{ w(y) \,:\, T_\tau[w] = w + \bar E_\tau,  \ \int_{\mathbb{T}^d}\! w(x) \, pr^1_*(\pi)(dx) \leq 0, \ \forall \pi \in \mathcal{M}^*(E_\tau) \Big\}.
\]
Conversely, let $w$ be a discrete  weak KAM solution satisfying $\int_{\mathbb{T}^d}\! w \, dpr^1_*(\pi) \leq 0$ for every $\pi \in \mathcal{M}^*(E_\tau)$. Let $y\in\mathbb{R}^d$ and for every $\delta_i$,  $(x_{-k}^i)_{k\geq0}$ be a discounted backward calibrated configuration starting at $y=x_0^i$. Then
\begin{multline*}
u_{\tau,\delta_i}(x_{-k}^i) -\frac{\bar E_\tau}{\tau\delta_i}  - w(x_{-k}^i) = (1-\tau\delta_i) \Big[ u_{\tau,\delta_i}(x_{-k-1}^i)  - w(x_{-k-1}^i) -\frac{\bar E_\tau}{\tau\delta_i} \Big] \\
+\big[ E_\tau(x_{-k-1}^i, x_{-k}^i) - w(x_{-k}^i)  + w(x_{-k-1}^i) -\bar E_\tau  \big]  -\tau\delta_i w(x_{-k-1}^i).
\end{multline*}
As  $E_\tau(x_{-k-1}^i, x_{-k}^i) - w(x_{-k}^i)  + w(x_{-k-1}^i) -\bar E_\tau  \geq 0$, by iterating these inequalities, one obtains
\[
u_{\tau,\delta_i}(y) -\frac{\bar E_\tau}{\tau\delta_i}   - w(y)  \geq \sum_{k\geq0} -\tau\delta_i (1-\tau\delta_i)^k w(x_{-k-1}^i) = -\iint_{\mathbb{T}^d\times\mathbb{T}^d}\! w(x) \, \pi_i(dx,dy),
\]
where $\pi_i$ is the probability measure  defined in part 3. As $\pi_i$ converges to a minimizing plan $\pi$, one obtains $u_\tau(y) - w(y) \geq - \int_{\mathbb{T}^d}\! w \, dpr^1_*(\pi) \geq 0$ and therefore $u_\tau \geq u_\tau^*$. Since $u_\tau^*$ is the  only accumulation point of $u_{\tau,\delta} -\frac{\bar E_\tau}{\tau\delta}$, the proof of proposition  \ref{proposition:DiscountedSelectionPrinciple} is complete.
\end{proof}

The only results in theorem \ref{theorem:SelectionPrinciple} to be proved are items \eqref{item1:SelectionPrinciple} and \eqref{item23:SelectionPrinciple}. Items \eqref{item21:SelectionPrinciple}--\eqref{item22:SelectionPrinciple}  are particular cases of proposition \ref{proposition:AprioriBoundDiscountedLaxOleinik}. Item \eqref{item3:SelectionPrinciple} is a particular case of proposition \ref{proposition:DiscountedSelectionPrinciple}. Item \eqref{item4:SelectionPrinciple} is a consequence of item \eqref{item23:SelectionPrinciple} and the existence of the balanced weak KAM solution \eqref{equation:AsymptoticallyDiscountedSolution}.

\begin{proof}[Proof of item \eqref{item1:SelectionPrinciple} of theorem \ref{theorem:SelectionPrinciple}]
{\it Part 1.} Let  $\tau>0$, and $\{x_{n}^{\tau,\delta}\}_{n\leq0}$ be a discounted backward calibrated configuration for the discrete action  $\mathcal{L}_\tau$  ending at $x$. We note 
\[
v_{n}^{\tau,\delta} := \frac{1}{\tau} \Big( x_{n+1}^{\tau,\delta} - x_n^{\tau,\delta} \Big), \quad \forall n \leq -1.
\]
We show in this part there exists a constant $C>0$, independent of $n,\delta$ and $x$, such that $\|v_{n}^{\tau,\delta} - v_{n-1}^{\tau,\delta} \| \leq C\tau$ for all $n\leq -1$. Let $x_n := x_{n}^{\tau,\delta}$ and $v_n := v_n^{\tau,\delta}$. By the definition of calibration  we have
\begin{align*}
u_{\tau,\delta}(x_{n+1}) &= (1-\tau\delta) u_{\tau,\delta}(x_n) + \mathcal{L}_\tau(x_{n},x_{n+1}) \\
&= (1-\tau\delta)^2 u_{\tau,\delta}(x_{n-1}) + (1-\tau\delta) \mathcal{L}_\tau(x_{n-1},x_n) + \mathcal{L}_\tau(x_n,x_{n+1}) \\
&\leq (1-\tau\delta) u_{\tau,\delta}(x) +\mathcal{L}_\tau(x,x_{n+1}), \quad \forall x \in \mathbb{R}^d \\
&\leq (1-\tau\delta)^2 u_{\tau,\delta}(x_{n-1}) + (1-\tau\delta) \mathcal{L}_\tau(x_{n-1},x) + \mathcal{L}_\tau(x,x_{n+1}), \quad \forall x \in \mathbb{R}^d.
\end{align*}
In other words $\{x_n^{\tau,\delta}\}_{n\leq0}$ is minimizing in the following sense
\[
(1-\tau\delta) \mathcal{L}_\tau(x_{n-1},x_n) + \mathcal{L}_\tau(x_n,x_{n+1}) \leq (1-\tau\delta) \mathcal{L}_\tau(x_{n-1},x) + \mathcal{L}_\tau(x,x_{n+1}), \quad \forall x \in \mathbb{R}^d,
\]
and satisfies the {\it discounted discrete Euler-Lagrange equation}
\begin{align}
&\quad (1-\tau\delta) \frac{\partial \mathcal{L}_\tau}{\partial y}(x_{n-1},x_n) + \frac{\partial \mathcal{L}_\tau}{\partial x}(x_n,x_{n+1}) = 0 \notag \\
\Longleftrightarrow&\quad (1-\tau\delta) \frac{\partial L}{\partial v}(x_{n-1}, v_{n-1}) - \frac{\partial L}{\partial v}(x_n, v_n) + \tau \frac{\partial L}{\partial x}(x_n, v_n) = 0 \notag \\
\Longleftrightarrow&\quad  \frac{1}{\tau} \Big[ \frac{\partial L}{\partial v}(x_n, v_n)  - \frac{\partial L}{\partial v}(x_{n-1}, v_{n-1}) \Big] = \frac{\partial L}{\partial x}(x_n, v_n) - \delta  \frac{\partial L}{\partial v}(x_{n-1}, v_{n-1}).
\end{align}
Proposition \ref{proposition:AprioriBoundDiscountedLaxOleinik} shows there exists  $R>0$ such that $\|v_n^{\tau,\delta}\| \leq R$, $\forall n\leq-1$. The property of positive definiteness (L1) implies the existence of a constant $\alpha(R)>0$ such that, for every $ x\in\mathbb{R}^d$, $v\in\mathbb{R}^d$  satisfying $\|v\| \leq R$,
\[
\frac{\partial^2 L}{\partial v \partial v}(x,v).(h,h) \geq \alpha(R) \|h\|^2, \quad \forall  h\in \mathbb{R}^d.
\]
By integrating over $t \in [0,1]$ the term
$
\frac{d}{dt} \Big( \frac{\partial L} {\partial v} \big( x_{n-1}+t(x_n-x_{n-1}), v_{n-1}+t(v_n-v_{n-1})\big) \Big)
$
and by taking the scalar product with $(v_n-v_{n-1})$, one obtains
\[
\alpha(R) \|v_n-v_{n-1}\| \leq \Big\|\frac{\partial L}{\partial x\partial v}\Big\| \ \| x_n-x_{n-1}\| + \tau \Big( \ \Big\|\frac{\partial L}{\partial x}\Big\| + \delta \Big\|\frac{\partial L}{\partial v} \Big\| \Big)
\]
where all norms $\|\cdot\|$ are taken over $\mathbb{T}^d \times \big\{ v \in \mathbb{R}^d \,:\,  \ \|v\| \leq R\| \big\}$. As  $\| x_{n}- x_{n-1}\| \leq \tau R$ thanks to item \eqref{item22:SelectionPrinciple} of proposition \ref{proposition:AprioriBoundDiscountedLaxOleinik}, one obtains $\| v_n - v_{n-1} \| \leq \tau C$, for some constant $C>0$, uniformly in $n,\delta$  and $x$. 

\medskip
{\it Part 2.} Let $\gamma_{\tau,\delta}^x : (-\infty,0] \to \mathbb{R}^d$ be the piecewise affine path interpolating the points $x_{n}$ at time $n\tau$. We show that $\gamma_{\tau,\delta}^x$ is  Lipschitz uniformly in $n,\delta$ and $\{x_n^{\tau,\delta}\}_{n\leq0}$. To simplify we write $\gamma= \gamma_{\tau,\delta}^x$. Let $s<t<0$. Either $s,t$ belong to the same interval $((n-1)\tau,n\tau]$. As $\gamma$ is affine with speed bounded by $R$, we obtain $\| \gamma(t) - \gamma(s) \| \leq |t-s| R$. Or $s,t$ belong to different  intervals. By introducing the points $x_{n}$ corresponding to the intermediate  times $s \leq n\tau \leq t$, one obtains again the same estimate.

{\it Part 3.} We choose a subsequence $\tau_i \to 0$ and a discounted backward calibrated configuration  $\{x_n^i\}_{n \leq 0}$ such that $\gamma_i := \gamma_{\tau_i,\delta}^x \to \gamma_{\delta}^x$ uniformly on any compact interval of $(-\infty,0]$ for some Lipschitz function $\gamma_{\delta}^x$. We claim there exists a uniformly Lipschitz function $V : (-\infty,0] \to \mathbb{R}^d$ such that 
\[
\int_{t}^0\! V(s) \, ds = x-\gamma_{\delta}^x(t), \quad \forall t \leq 0.
\]
Let $T\subset(-\infty,0)$ be a countable dense subset. Let be  $V_i : (-\infty,0) \to \mathbb{R}^d$ such that
\[
V_i(t)  :=\frac{1}{\tau_i} \Big(x_{n }^{i} - x_{n-1}^{i} \Big), \quad \forall t \in [(n-1)\tau_i, n\tau_i) ,\ \forall n\leq0.
\]
By compactness of the ball $\{ v \,:\, \|v\| \leq R\}$, by taking a subsequence if needed, we may assume $V_i(t)\to V(t)$ exists for every $t \in T$. Let $s<t<0$ and $m \leq n$  be non positive integers such that $(m-1)\tau_i \leq  s < m\tau_i$ and $(n-1)\tau_i \leq  t < n\tau_i$. Part 1 implies, 
\begin{gather*}
\| V_i(t) - V_i(s) \| = \| v_{n-1}^i - v_{m-1}^i \| \leq (n - m)\tau_i C \leq |t-s| C + \tau_i C.
\end{gather*}
By letting $\tau_i \to 0$, one obtains  $\|V(t)-V(s) \| \leq |t-s| C$ for every $s,t \in T$. Let  $V: (-\infty,0) \to \mathbb{R}^d$ be the unique Lipschitz extension of $V$. Then $V_i(t) \to V(t)$ for every $t \in (-\infty,0)$.  Since
\[
\int_t^0\! V_i(s) \, ds = x- \gamma_{i}(t), \quad \forall t<0,
\] 
the claim is proved and $\gamma_{\delta}^x$ is a  $C^{1,1}$ path.

\medskip
{\it Part 4.} Item \eqref{item21:SelectionPrinciple} of proposition \ref{proposition:AprioriBoundDiscountedLaxOleinik} shows there exists a constant $C>0$ such that $\textrm{\rm Lip}(u_{ \tau_i,\delta}) \leq C$. By taking a subsequence if necessary, we may assume that $u_i := u_{\tau_i,\delta} \to u$ uniformly for some Lipschitz function $u$. We claim that
\begin{gather*}
u(x) - e^{t\delta}u(\gamma_{\delta}^x(t)) = \int_t^0\! e^{s\delta} L(\gamma_{\delta}^x(s) , \dot\gamma_{\delta}^x(s)) \, ds,\quad   \forall x \in \mathbb{R}^d, \  \forall t\leq0.
\end{gather*}
Indeed using the notations in part 3, we have for every $n\leq-1$,
\[
u_i(x) = (1-\tau_i\delta)^{-n} u_i \circ \gamma_i(n\tau_i) + \sum_{k=n}^{-1} (1-\tau_i\delta)^{-k-1} \tau_i  L \big(\gamma_i(k\tau_i),V_i( k\tau_i) \big).
\]
Let $t<0$ be fixed,  $n\leq 0$ be such that $(n-1)\tau_i \leq t < n\tau_i$. Then
\begin{align*}
I := \Big| \sum_{k=n}^{-1} (1-\tau_i\delta)^{-k-1} \tau_i  L \big(\gamma_i(k\tau_i),V_i( k\tau_i) \big) - \int_{n\tau_i}^0 e^{s\delta}  L(\gamma_i(s) ,V_i(s)) \,ds\ \Big|
\end{align*}
can be bounded from above by the following three terms $I_1,I_2,I_3$
\begin{align*}
I_1 &= \sum_{k=n}^{-1 } (1-\tau_i\delta)^{-k-1}  \int_{k\tau_i}^{(k+1)\tau_i} \big| L \big(\gamma_i(k\tau_i),V_i( k\tau_i) \big) - L(\gamma_i(s), V_i(s) ) \big| \,ds  \\
&\leq R\Big\| \frac{\partial L}{\partial x} \Big\| \frac{\tau_i}{\delta}, \\
I_2 &= \sum_{k=n}^{-1} \Big[  (1-\tau_i\delta)^{-k-1} -(1-\tau_i\delta)^{-k} \Big]  \int_{k\tau_i}^{(k+1)\tau_i} \big|  L(\gamma_i(s), V_i(s) ) \big| \,ds  \\
&\leq \tau_i \| L \| \  \Big( 1- (1-\tau_i\delta)^{-n} \Big) \leq \tau_i \| L \|, \\
I_3 &= \sum_{k=n}^{-1} \int_{k\tau_i}^{(k+1)\tau_i} \Big[ e^{s\delta} -(1-\tau_i\delta)^{-k} \Big] \ \big|  L(\gamma_i(s), V_i(s) ) \big| \,ds  \\
&\leq \|L \| \  \Big[ \int_{n\tau_i}^0 e^{s\delta} \, ds - \tau_i \sum_{k=n}^{-1} (1-\tau_i\delta)^{-k} \Big] \leq  \tau_i \| L \|.
\end{align*}
We finally obtain 
\[
I \leq R\Big\| \frac{\partial L}{\partial x} \Big\| \frac{\tau_i}{\delta} +  2\tau_i \| L \|,
\]
and   the  claim is proved by letting $\tau_i \to 0$, since $n\tau_i \to t$, $u_i \to u$ uniformly on $\mathbb{R}^d$,  and both $\gamma_i \to \gamma_\delta^x$ and $V_i \to \dot\gamma_\delta^x$  uniformly on any compact set of $(-\infty,0]$. 

\medskip
{\it Part 5.} We claim that
\begin{gather*}
u(x) -e^{-t\delta} u(x-tv) \leq \int_{-t}^0\! e^{s\delta} L(x+sv,v) \,ds, \quad    \forall x \in \mathbb{R}^d, \  \forall t\geq0, \ \forall v \in \mathbb{R}^d.
\end{gather*}
We choose as before $n\leq0$ such that $(n-1)\tau_i \leq t < n\tau_i$. Let  $x_k^i := x-k\tau_i v$, $\forall k \in\{n,\ldots,-1,0\}$. By definition of  $u_i = u_{\tau_i,\delta}$, we have
\[
u_i(x) \leq (1-\tau_i\delta)^{-n} u_i(x_n^i) + \sum_{k=n}^{-1} (1-\tau_i\delta)^{-k-1} \tau_i  L (x_k^i,v).
\]
Then the expression $| \sum_{k=n}^{-1} (1-\tau_i\delta)^{-k-1} \tau_i  L (x_k^i,v) - \int_{n\tau_i}^0 e^{s\delta}  L(x+sv ,v) \,ds |$
is estimated in the same way as before, and  the claim is proved.

\medskip
{\it Part 6.} By approximating any $C^2$ path picewise linearly, we obtain that, for any $\gamma \in C^2((-\infty,0],\mathbb{R}^d)$ ending at $\gamma(0)=x$,
\begin{gather*}
u(x) -e^{-t\delta} u(\gamma(-t)) \leq \int_{-t}^0\! e^{s\delta} L(\gamma(s),\dot\gamma(s)) \,ds, \quad    \forall x \in \mathbb{R}^d, \  \forall t\geq0, \ \forall v \in \mathbb{R}^d.
\end{gather*}
We just have proved that $u$ is unique given by \eqref{equation:DiscoutedProgrammingPrinciple}, and  that $\gamma_\delta^x$ is a  $C^2$ minimizer  by Tonelli Weierstrass theorem.
\end{proof}

\begin{proof}[Proof of item \eqref{item23:SelectionPrinciple} of theorem \ref{theorem:SelectionPrinciple}]
We first show $u_{\tau,\delta}  -u_\delta \leq C\frac{\tau}{\delta}$.  Thanks to item \eqref{item1:SelectionPrinciple}, there exists a constant $C_1>0$ such that, for every $x \in \mathbb{R}^d$,  there exists a $C^{1,1}$ curve $\gamma_\delta^x : (-\infty,0] \to \mathbb{R}^d$,  satisfying $\gamma_\delta^x(0)=x$, $\| \dot \gamma_\delta^x \| \leq C_1$ and  $\text{\rm Lip}(\dot \gamma_\delta^x) \leq C_1$ uniformly on $(-\infty,0]$, and 
\[
u_\delta(x) = \int_{-\infty}^0\! e^{s\delta} L(\gamma_\delta^x(s),\dot\gamma_\delta^x(s)) \,ds.
\]
Let $x_{-k}:= \gamma_\delta^x(-k\tau)$, $v_{-k} := (x_{-k+1}-x_{-k})/ \tau$, for every $k\geq0$. Then
\begin{gather*}
u_{\tau,\delta}(x) \leq \sum_{k\geq0} (1-\tau \delta)^{k} \mathcal{L}_\tau(x_{-k-1},x_{-k}), \\
\begin{split}
(1-\tau\delta) u_{\tau,\delta}(x) - u_\delta(x) &\leq \sum_{k\geq 0} \int_{-(k+1)\tau}^{-k\tau} \Big[ (1-\tau \delta)^{k+1}-e^{s\delta} \Big] L(x_{-k-1},v_{-k-1}) \\
&+ \sum_{k\geq0} \int_{-(k+1)\tau}^{-k\tau} e^{s\delta} \big[ L(x_{-k-1}, v_{-k-1}) - L(\gamma_\delta(s), \dot\gamma_\delta(s) ) \big] \,ds.
\end{split}
\end{gather*}
For every $s \in [-(k+1)\tau,-k\tau]$, 
\begin{gather*}
\| \gamma_\delta(s) - x_{-k-1} \| \leq C_1\tau, \quad \| \dot\gamma_\delta(s) - v_{-k-1} \| \leq C_1\tau, \\
| L(x_{-k-1}, v_{-k-1}) - L(\gamma_\delta(s), \dot\gamma_\delta(s) ) | \leq \| DL \|_\infty C_1 \tau,
\end{gather*}
(where $\| DL \|_\infty$ is computed by taking the supremum of $\|DL(x,v)\|_\infty$ over $x\in\mathbb{R}^d$ and $\|v\| \leq C_1$). Moreover 
\begin{gather*}
	\sum_{k\geq 0} \int_{-(k+1)\tau}^{-k\tau} \Big[ e^{s\delta} - (1-\tau \delta)^{k+1} \Big] \leq \frac{1}{\delta} - \frac{\tau(1-\tau\delta)}{\tau\delta} = \tau.
\end{gather*}
Let $\|L\|_\infty$ be the supremum of $L(x,v)$ over $x\in\mathbb{R}^d$ and $\|v\| \leq C_1$. Then item \eqref{item:AprioriBoundDiscountedLaxOleinik2} of proposition \ref{proposition:AprioriBoundDiscountedLaxOleinik}  implies 
\[
u_{\tau,\delta}(x) - u_\delta(x) \leq 2\| L \|_\infty\tau + \|DL \|_\infty C_1\frac{\tau}{\delta} \leq   \big( 2 \| L \|_\infty + \|DL \|_\infty C_1 \big) \frac{\tau}{\delta} := C \frac{\tau}{\delta}.
\]

We next show $u_{\tau,\delta}- u_\delta \geq -C \frac{\tau}{\delta}$. Let $x\in\mathbb{R}^d$ and $\{x_{-k}\}_{k\geq0}$  a discounted backward calibrated configuration for $\mathcal{L}_\tau$ starting at $x$, then
\[
u_{\tau,\delta}(x) = \sum_{k\geq0} (1-\tau\delta)^k \mathcal{L}_\tau(x_{-k-1},x_{-k}).
\]
Let $\gamma:(-\infty,0] \to \mathbb{R}^d$ be the piecewise linear path interpolating the points $x_{-k}$ at the times $-k\tau$. Then,  property \eqref{equation:DiscoutedProgrammingPrinciple} implies
\[
u_\delta(x) \leq \int_{-\infty}^0\! e^{s\delta} L(\gamma(s),\dot\gamma(s)) \,ds.
\] 
Using item \eqref{item22:SelectionPrinciple} of proposition \ref{proposition:AprioriBoundDiscountedLaxOleinik}, we notice that for every $s \in [-(k+1)\tau,-k\tau]$, 
\begin{gather*}
\| \gamma(s) - x_{-k-1} \| \leq \| x_{-k} - x_{-k-1} \| \leq R\tau, \quad \dot\gamma(s) = (x_{-k}-x_{-k-1})/\tau := v_{-k-1}, \\
| L(x_{-k-1}, v_{-k-1}) - L(\gamma(s), \dot\gamma(s) ) | \leq \Big\|\frac{\partial L}{\partial x} \Big\|_\infty R \tau,
\end{gather*}
(where $\big\|\frac{\partial L}{\partial x} \big\|_\infty $ is computed by taking the supremum of $\big\|\frac{\partial L}{\partial x}(x,v)\big\|$ over $x\in\mathbb{R}^d$ and $\|v\| \leq R$). Let $C_3 := \inf_{x,v \in \mathbb{R}^d} L(x,v)$. Then item \eqref{item:AprioriBoundDiscountedLaxOleinik2}  of proposition \ref{proposition:AprioriBoundDiscountedLaxOleinik} implies
\[
u_{\tau,\delta}(x) - u_\delta(x) \geq  \Big(    C_3 - \| L \| - \Big\|\frac{\partial L}{\partial x}\Big\|_\infty R \Big) \frac{\tau}{\delta} := - C \frac{\tau}{\delta}. \qedhere
\]
\end{proof}

\end{section}

\section*{Acknowledgement}
We would like to thank A. Fathi for the opportunity to have an access to his preprint \cite{DaviniFathiIturriagaZavidovique2014}. At the time our paper was written, we did not know the existence of \cite{DFIZ2}. X. Su is supported by both National
Natural Science Foundation of China (Grant No. 11301513) and ``the Fundamental Research Funds for the Central Universities". Ph. Thieullen is supported by  ANR WKBHJ ANR-12-BS01-0020.

\bibliographystyle{alpha}
\bibliography{discrete-weak-KAM}

\begin{thebibliography}{AAOIM14}

\bibitem[AAOIM14]{Ishii'14}
Eman~S. Al-Aidarous, Alzahrani~Ebraheem O., Hitoshi Ishii, and Younas Arshad~M.
  M.
\newblock A convergence result for the ergodic problem for hamilton-jacobi
  equations with neumann type boundary conditionss.
\newblock 2014.

\bibitem[ALD83]{Aubry'83}
Serge Aubry and P.Y. Le~Daeron.
\newblock The discrete {F}renkel-{K}ontorova model and its extensions: I. exact
  results for the ground states.
\newblock {\em Physica D}, 8:381--422, 1983.

\bibitem[Atk76]{Atkinson76}
Giles Atkinson.
\newblock Recurrence of co-cycles and random walks.
\newblock {\em J. London Math. Soc. (2)}, 13(3):486--488, 1976.

\bibitem[Bar94]{Barles'94}
Guy Barles.
\newblock {\em Solutions de viscosit\'e des \'equations de
  {H}amilton-{J}acobi}.
\newblock Springer, 1994.

\bibitem[Bar13]{Barles2013}
Guy Barles.
\newblock An {I}ntroduction to the {T}heory of {V}iscosity {S}olutions for
  {F}irst-{O}rder {H}amilton-{J}acobi {E}quations and {A}pplications.
\newblock 2074:49--109, 2013.

\bibitem[BB07]{BernardBuffoni2007}
Patrick Bernard and Boris Buffoni.
\newblock Weak {KAM} pairs and {M}onge-{K}antorovich duality.
\newblock In {\em Asymptotic analysis and singularities---elliptic and
  parabolic {PDE}s and related problems}, volume~47 of {\em Adv. Stud. Pure
  Math.}, pages 397--420. Math. Soc. Japan, Tokyo, 2007.

\bibitem[BCD97]{BardiDolcetta'97}
Martino Bardi and Italo Capuzzo-Dolcetta.
\newblock {\em Optimal control and viscosity solutions of
  {H}amilton-{J}acobi-{B}ellman equations}.
\newblock Systems \& Control: Foundations \& Applications. Birkh\"auser Boston,
  Inc., Boston, MA, 1997.
\newblock With appendices by Maurizio Falcone and Pierpaolo Soravia.

\bibitem[BFZ16]{BouillardFaouZavidovique}
Anne Bouillard, Erwan Faou, and Maxime Zavidovique.
\newblock Fast weak-{KAM} integrators for separable {H}amiltonian systems.
\newblock {\em Math. Comp.}, 85(297):85--117, 2016.

\bibitem[CCDG08]{CamCapGom'08}
Fabio Camilli, Italo Cappuzzo~Dolcetta, and Diogo~A. Gomes.
\newblock Error estimates for the approximations of the effective
  {H}amiltonian.
\newblock {\em Appl. Math. Optim.}, 57:30--57, 2008.

\bibitem[CG86]{Griffiths'86}
Weiren Chou and Robert~B. Griffiths.
\newblock Ground states of one-dimensional systems using effective potentials.
\newblock {\em Phys. Rev. B}, 34:6219--6234, Nov 1986.

\bibitem[CI99]{Contreras'99}
Gonzalo Contreras and Renato Iturriaga.
\newblock {\em Global minimizers of autonomous {L}agrangians}.
\newblock 22$^\circ$ Col\'oquio Brasileiro de Matem\'atica, IMPA, Rio de
  Janeiro. IMPA, 1999.

\bibitem[CIL92]{Crandall'92}
Michael~G. Crandall, Hitoshi Ishii, and Pierre-Louis Lions.
\newblock User’s guide to viscosity solutions of second order partial
  differential equations.
\newblock {\em Bull. Amer. Math. Soc. (N.S.)}, 27:1--67, 1992.

\bibitem[CIPP98]{Contreras'98}
Gonzalo Contreras, Renato Iturriaga, G.~P. Paternain, and M.~Paternain.
\newblock Lagrangian graphs, minimizing measures and {M}a\~n\'e's critical
  values.
\newblock {\em Geom. Funct. Anal.}, 8(5):788--809, 1998.

\bibitem[DFIZ16a]{DFIZ2}
Andrea Davini, Albert Fathi, Renato Iturriaga, and Maxime Zavidovique.
\newblock Convergence of the solutions of the discounted equation: the discrete
  case.
\newblock {\em Math. Z.}, 284(3-4):1021--1034, 2016.

\bibitem[DFIZ16b]{DaviniFathiIturriagaZavidovique2014}
Andrea Davini, Albert Fathi, Renato Iturriaga, and Maxime Zavidovique.
\newblock Convergence of the solutions of the discounted {H}amilton-{J}acobi
  equation: convergence of the discounted solutions.
\newblock {\em Invent. Math.}, 206(1):29--55, 2016.

\bibitem[Fat97a]{Fathi'97_1}
Albert Fathi.
\newblock {\em Comptes Rendus des S\'eances de l'Acad\'emie des Sciences,
  S\'erie I, Math\'ematique}, 324:1043--1046, 1997.

\bibitem[Fat97b]{Fathi'97_2}
Albert Fathi.
\newblock Solutions {KAM} faibles conjugu\'ees et barri\`eres de {P}eierls.
\newblock {\em Comptes Rendus des S\'eances de l'Acad\'emie des Sciences,
  S\'erie I, Math\'ematique}, 325:649--652, 1997.

\bibitem[Fat08]{Fathi'08}
Albert Fathi.
\newblock {\em Weak KAM Theorem in {L}agrangian Dynamics}.
\newblock 2008.

\bibitem[FK38]{FK'38}
Ya.~I. Frenkel and T.~A. Kontorova.
\newblock On the theory of plastic deformation and twinning i, ii, iii.
\newblock {\em Zhurnal Eksperimental'noi i Teoreticheskoi Fiziki}, 8:89--95
  (I), 1340--1349 (II), 1349--1359 (III), 1938.

\bibitem[FR10]{FalconeRorro'10}
Maurizio Falcone and Marco Rorro.
\newblock Optimization techniques for the computation of the effective
  hamiltonian.
\newblock In {\em Recent {A}dvances in {O}ptimization and its {A}pplications in
  {E}ngineering}, pages 225--236. 2010.

\bibitem[GO04]{Gomes'04}
Diogo~A. Gomes and A.D. Oberman.
\newblock Computing the effective {H}amiltonian using a variational approach.
\newblock {\em SIAM J. Control Optim.}, 43(3):792--812, 2004.

\bibitem[Gom05]{Gomes'05}
Diogo~A. Gomes.
\newblock Viscosity solution methods and the discrete {A}ubry-{M}ather problem.
\newblock {\em Discrete Contin. Dyn. Syst.}, 13(1):103--116, 2005.

\bibitem[GT11]{Thieullen'11}
Eduardo Garibaldi and Philippe Thieullen.
\newblock Minimizing orbits in the discrete {A}ubry-{M}ather model.
\newblock {\em Nonlinearity}, 24(2):563--611, 2011.

\bibitem[Ish13]{Ishii2013}
Hitoshi Ishii.
\newblock A short {I}ntroduction to {V}iscosity {S}olutions and the {L}arge
  {T}ime {B}ehavior of {S}olutions of {H}amilton-{J}acobi {E}quations.
\newblock 2074:111--249, 2013.

\bibitem[LPV87]{Lions'87}
Pierre-Louis Lions, George~C. Papanicolaou, and Srinivasa~R.S. Varadhan.
\newblock Homogenization of {H}amilton-{J}acobi equations.
\newblock 1987.
\newblock Preprint.

\bibitem[Mat91]{Mather'91}
John~N. Mather.
\newblock Action minimizing invariant measures for positive definite
  {L}agrangian systems.
\newblock {\em Math. Z.}, 207(2):169--207, 1991.

\bibitem[Mat93]{Mather'93}
John~N. Mather.
\newblock Variational construction of connecting orbits.
\newblock {\em Ann. Inst. Fourier (Grenoble)}, 43(5):1349--1386, 1993.

\bibitem[Mn87]{Manebook}
Ricardo Ma\~n\'e.
\newblock {\em Ergodic theory and differentiable dynamics}, volume~8 of {\em
  Ergebnisse der Mathematik und ihrer Grenzgebiete (3) [Results in Mathematics
  and Related Areas (3)]}.
\newblock Springer-Verlag, Berlin, 1987.
\newblock Translated from the Portuguese by Silvio Levy.

\bibitem[Mn96]{Mane'96}
Ricardo Ma\~n\'e.
\newblock Generic properties and problems of minimizing measures of
  {L}agrangian systems.
\newblock {\em Nonlinearity}, 9:273--310, 1996.

\bibitem[MT14]{Mitake'14}
Hiroyoshi Mitake and Hung~V. Tran.
\newblock Selection problems for a discounted degenerate viscous
  hamilton--jacobi equations.
\newblock 2014.

\bibitem[Nus91]{Nussbaum1991}
Roger~D. Nussbaum.
\newblock Convergence of iterates of a nonlinear operator arising in
  statistical mechanics.
\newblock {\em Nonlinearity}, 4:1223--1240, 1991.

\bibitem[Ror06]{Rorro'06}
Marco Rorro.
\newblock An approximation scheme for the effective {H}amiltonian and
  applications.
\newblock {\em Applied {N}umerical {M}athematics}, 56:1238--1254, 2006.

\bibitem[Zav12]{Zavidovique2012}
Maxime Zavidovique.
\newblock Strict sub-solutions and {M}a\~n\'e potential in discrete weak {KAM}
  theory.
\newblock {\em Comment. Math. Helv.}, 87(1):1--39, 2012.

\end{thebibliography}

\end{document}